\numberwithin{figure}{section}
\newtheorem{theorem}{Theorem}[section]
\newtheorem{lemma}{Lemma}[section]
\newtheorem{definition}[lemma]{Definition}
\newtheorem{problem}{{\bf{Problem}}}[section]
\numberwithin{equation}{section}
\begin{document}

\title[Transonic shocks in cylinders]{Transonic shocks for 3-D axisymmetric compressible inviscid flows in cylinders}

\author{Hyangdong Park}
\address{Center for Mathematical Analysis and Computation (CMAC), Yonsei University, 50 Yonsei-Ro, Seodaemun-Gu, Seoul 03722, Republic of Korea}
\email{hyangdong.park@yonsei.ac.kr}

\author{Hyeongyu Ryu} 
\address{Department of Mathematics, POSTECH, 77 Cheongam-Ro, Nam-Gu, Pohang, Gyeongbuk 37673, Republic of Korea}
\email{hgryu@postech.ac.kr}

\keywords{axisymmetric, free boundary problem, Helmholtz decomposition, steady Euler system, swirl, transonic shock}

\subjclass[2010]{
 35J66, 35M10, 35Q31, 35R35, 76H05, 76L05, 76N10}


\date{\today}

\begin{abstract}
We establish the existence of an axisymmetric weak solution to the steady Euler system with a transonic shock, nonzero vorticity, and nonzero swirl in a three-dimensional cylinder.
When prescribing the supersonic solution in the upstream region by axisymmetric functions with variable entropy and variable angular momentum density(=swirl), we construct such a solution by using a Helmholtz decomposition of the velocity field and the method of iteration.
An iteration scheme is developed using a delicate decomposition of the Rankine-Hugoniot conditions on the transonic shock via Helmholtz decomposition.
\end{abstract}
\maketitle


\tableofcontents

\section{Introduction}
The steady inviscid compressible flow of ideal polytropic gas in $\mathbb{R}^3$ is governed by the steady Euler system (cf. \cite{bae2019contact3D,courant1999supersonic}):
\begin{equation}\label{E-System}
\left\{
\begin{split}
&\mbox{div}(\rho{\bf u})=0,\\
&\mbox{div}(\rho{\bf u}\otimes{\bf u}+p\,{\mathbb I}_3)=0\quad(\mathbb{I}_3\mbox{ : $3\times 3$ identity matrix}),\\
&\mbox{div}(\rho{\bf u}B)=0.
\end{split}
\right.
\end{equation}
In the system above, $\rho=\rho({\bf x})$, ${\bf u}=(u_1,u_2,u_3)({\bf x})$, $p=p({\bf x})$, and $B=B({\bf x})$ denote the density, velocity,  pressure, and the {\emph{Bernoulli invariant}} of the flow, respectively, at ${\bf x}=(x_1,x_2,x_3)\in\mathbb{R}^3$.
For a constant $\gamma>1$ called the {\emph{adiabatic exponent}}, $B$ is defined by 
\begin{equation*}\label{Ber-inv}
B:=\frac{1}{2}|{\bf u}|^2+\frac{\gamma p}{(\gamma-1)\rho}.
\end{equation*}

Let $\Omega\subset \mathbb{R}^3$ be an open connected set, and let $\Gamma$ be a non-self-intersecting $C^1$-surface dividing $\Omega$ into two disjoint open subsets $\Omega^{\pm}$ such that $\Omega=\Omega^-\cup\Gamma\cup \Omega^+$.
\begin{definition}
We define ${\bf U}=({\bf u},\rho, p)\in [L^{\infty}_{\rm loc}(\Omega)\cap C^1_{\rm loc}(\Omega^{\pm})\cap C^0_{\rm loc}(\Omega^{\pm}\cup \Gamma)]^5$ to be a {\emph{weak solution}} to \eqref{E-System} in $\Omega$ if the following properties are satisfied: 
For any test function $\xi\in C_0^{\infty}(\Omega)$ and $j=1,2,3$,
\begin{equation}\label{weak}
\int_{\Omega}\rho {\bf u}\cdot\nabla\xi d{\bf x}=\int_{\Omega}(\rho u_j{\bf u}+p{\bf e}_j)\cdot\nabla\xi d{\bf x}=\int_{\Omega}\rho {\bf u}B\cdot\nabla\xi d{\bf x}=0,
\end{equation}
where ${\bf e}_j$ is the unit vector in the $x_j$-direction.
\end{definition}
By the integration by parts, one can easily check that ${\bf U}$ satisfies \eqref{weak} if and only if 
\begin{itemize}
\item[$(w_1)$] ${\bf U}$ is a classical solution to \eqref{E-System} in $\Omega^{\pm}$;
\item[$(w_2)$] ${\bf U}$ satisfies the Rankine-Hugoniot conditions
\begin{eqnarray}
\label{R-H-1}&&[\rho{\bf u}\cdot{\bf n}]_{\Gamma}=0,\quad[\rho({\bf u}\cdot{\bf n})^2+p]_{\Gamma}=0,\quad[\rho{\bf u}\cdot{\bf n}B]_{\Gamma}=0,\\
\label{R-H-3}&&\rho({\bf u}\cdot{\bf n})[{\bf u}\cdot{\bm \tau}_k]_{\Gamma}=0\quad\text{for all\,\, $k=1,2$,}
\end{eqnarray}
where $[\,\cdot\,]_{\Gamma}$ is defined by
$$[F({\bf x})]_{\Gamma}:=\left.F({\bf x})\right|_{\overline{\Omega^-}}-\left.F({\bf x})\right|_{\overline{\Omega^+}}\quad\mbox{for}\quad {\bf x}\in\Gamma,$$
${\bf n}$ is a unit normal vector field on $\Gamma$, 
and ${\bm \tau}_k$ ($k=1,2$) are unit tangent vector fields on $\Gamma$ such that they are linearly independent at each point on $\Gamma$.
\end{itemize}

Assume that $\rho>0$ in $\Omega$. Then, the condition in \eqref{R-H-3} is satisfied if either ${\bf u}\cdot{\bf n}=0$ on $\Gamma$, or $[{\bf u}\cdot{\bm \tau}_k]_{\Gamma}=0$ for all $k=1,2$.
We are interested in the latter case. 
For the former case, one can refer to \cite{bae2019contact,bae2019contact3D} and the references therein.

\begin{definition}
\label{definition-wsol}

We define ${\bf U}=( {\bf u}, \rho, p)\in [L^{\infty}_{\rm loc}(\Omega)\cap C^1_{\rm loc}(\Omega^{\pm})\cap C^0_{\rm loc}(\Omega^{\pm}\cup \Gamma)]^5$ to be a weak solution to \eqref{E-System} in $\Omega$ with a {\emph{transonic shock $\Gamma$}} if we have the following properties:
\begin{itemize}
\item[(i)] $\Gamma$ is a non-self-intersecting $C^1$-surface dividing $\Omega$ into two open subsets $\Omega^{\pm}$ such that $\Omega=\Omega^+\cup\Gamma\cup \Omega^-$;

\item[(ii)] ${\bf U}$ satisfies $(w_1)$, i.e., ${\bf U}$ is a classical solution to \eqref{E-System} in $\Omega^{\pm}$;

\item[(iii)] (Positivity of density) $\rho>0$ in $\overline{\Omega}$;

\item[(iv)] (Rankine-Hugoniot conditions) ${\bf U}$ satisfies \eqref{R-H-1} in $(w_2)$ and $[{\bf u}\cdot{\bm\tau}_k]_{\Gamma}=0$  for all $k=1,2$;

\item[(v)] (Transonic speed)
$|{\bf u}|>c$ (supersonic speed) in $\Omega^-$ and $|{\bf u}|<c$ (subsonic speed) in $\Omega^+$ for the sound speed $c:=\sqrt{\frac{\gamma p}{\rho}}$;

\item[(vi)] (Admissibility) ${\bf u}|_{\overline{\Omega^-}\cap \Gamma}\cdot{\bf n}>{\bf u}|_{\overline{\Omega^+}\cap \Gamma}\cdot{\bf n}>0$ for the unit normal vector field ${\bf n}$ on $\Gamma$ pointing toward $\Omega^+$.
\end{itemize}
\end{definition}

In this paper, we study the existence of a weak solution to \eqref{E-System} with a transonic shock in the sense of Definition \ref{definition-wsol} in a three-dimensional cylinder (Figure \ref{shock-def}).

\begin{figure}[!h]
\centering
\includegraphics[scale=0.73]{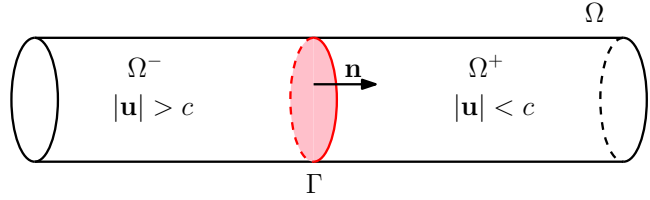}
\caption{Transonic shock}\label{shock-def}
\end{figure}


Let $(x,r,\theta)$ denote the cylindrical coordinates of ${\bf x}=(x_1,x_2,x_3)\in\mathbb{R}^3$, i.e., 
\begin{equation*}
(x_1,x_2,x_3)=(x,r\cos\theta,r\sin\theta),\quad r\ge0,\quad\theta\in\mathbb{T},
\end{equation*}
where $\mathbb{T}$ is a one-dimensional torus with period $2\pi$. 
Then, any function $f({\bf x})$ and vector-valued function ${\bf F}({\bf x})$ can be represented by
\begin{equation*}
f({\bf x})=f(x,r,\theta)\quad\mbox{and}\quad{\bf F}({\bf x})=F_x(x,r,\theta){\bf e}_x+F_r(x,r,\theta){\bf e}_r+F_{\theta}(x,r,\theta){\bf e}_\theta
\end{equation*}
for orthonormal vectors
\begin{equation*}
{\bf e}_x=(1,0,0),\quad {\bf e}_r=(0,\cos\theta,\sin\theta),\quad{\bf e}_{\theta}=(0,-\sin\theta, \cos\theta).
\end{equation*}
\begin{definition} {\cite{bae2019contact3D,bae20183}}
\begin{itemize}
\item[(i)] A function $f({\bf x})$ is {\emph{axisymmetric}} if its value is independent of $\theta$.
\item[(ii)] A vector-valued function ${\bf F}({\bf x})$ is {\emph{axisymmetric}} if each of functions $F_x$, $F_r$, and $F_\theta$ is axisymmetric.
\end{itemize}
\end{definition}
The purpose of this paper is to prove the existence of an axisymmetric weak solution to the steady Euler system \eqref{E-System} with a transonic shock in a three-dimensional cylinder.
The existence, uniqueness, and stability of transonic shocks for 3-D steady flows in cylindrical nozzles were studied in \cite{chen2003multidimensional, chen2004steady, chen2007existence,chen2009uniqueness,chen2008trans,chen2008transonic}. 
In \cite{chen2003multidimensional,chen2004steady,chen2007existence}, the existence and stability of multidimensional transonic shocks for potential flows were established.
In \cite{chen2009uniqueness}, authors proved the uniqueness of solutions with a transonic shock in a class of transonic shock solutions, which are not necessarily small perturbations of the background solution, for potential flow. 
In \cite{chen2008trans, chen2008transonic}, the existence and stability of the perturbed compressible flow including a transonic shock were studied for the prescribed pressure at the exit up to a constant. 
For studies on multidimensional transonic shocks in diverging nozzles, see  \cite{bae2011transonic,li2010transonic,liu2016stability,liu2009global,xin2008transonic} and the references cited therein.

In this paper, we establish the existence of an axisymmetric weak solution to the steady Euler system with a transonic shock, nonzero vorticity, and nonzero swirl in a three-dimensional cylinder.
When prescribing the supersonic solution in the upstream region by axisymmetric functions with variable entropy and variable angular momentum density(=swirl), we construct such a solution by using a Helmholtz decomposition of the velocity field and the method of iteration.
An iteration scheme is developed using a delicate decomposition of the Rankine-Hugoniot conditions on the transonic shock via Helmholtz decomposition.
This approach, using Helmholtz decomposition, is a new attempt to investigate multidimensional transonic shock solutions to the Euler system.

The structure of the paper is as follows: 
In Section \ref{3D-sec-Main}, the main problem and theorem are stated as Problem \ref{Shock-Prob} and Theorem \ref{MainThm}, respectively.
In Section \ref{3D-sec-Hel}, we reformulate the main problem via Helmholtz decomposition, and state its solvability as Theorem \ref{Thm-HD}.
In Section \ref{sec-proof-HD}, we prove Theorem \ref{Thm-HD} by using the method of iteration.
Finally, in Section \ref{sec-proof-main}, we prove Theorem \ref{MainThm} by Theorem \ref{Thm-HD}.

\section{Problems and Main Results}\label{3D-sec-Main}
Define a cylinder $\mathcal{N}$ by 
\begin{equation*}\label{def-N-cyl}
\mathcal{N}:=\left\{(x_1,x_2,x_3)\in\mathbb{R}^3:\mbox{ }-1<x_1<1,\mbox{ }0\le \sqrt{x_2^2+x_3^2}<1\right\}.
\end{equation*}
As defined in the previous section, let $(x,r,\theta)$ be the cylindrical coordinates of $(x_1,x_2,x_3)\in\mathbb{R}^3$, i.e.,
$$(x_1,x_2,x_3)=(x,r\cos\theta,r\sin\theta),\quad r\ge 0,\quad\theta\in\mathbb{T},$$
where $\mathbb{T}$ denotes a one-dimensional torus with period $2\pi$.
Then, the entrance $\Gamma_{\rm en}$, exit $\Gamma_{\rm ex}$, and the wall $\Gamma_{\rm w}$ of $\mathcal{N}$ are defined as 
\begin{equation*}
\Gamma_{\rm en}:=\partial\mathcal{N}\cap\{x=-1\},\,\,
\Gamma_{\rm ex}:=\partial\mathcal{N}\cap\{x=1\},\,\,\Gamma_{\rm w}:=\partial\mathcal{N}\cap\{r=1\}.
\end{equation*}

We first construct a simple solution.
Let $(u_0^-,\rho_0^-,p_0^-)$ be positive constants satisfying 
\begin{equation*}
u_0^->\sqrt{\frac{\gamma p_0^-}{\rho_0^-}}.
\end{equation*}
Define a function $U_0$ by 
\begin{equation*}
U_0(x_1,x_2,x_3):=\left\{
\begin{split}
({\bf u}_0^-,\rho_0^-,p_0^-)\quad&\mbox{for}\quad x_1<0,\\
({\bf u}_0^+,\rho_0^+,p_0^+)\quad&\mbox{for}\quad x_1>0,
\end{split}\right.
\end{equation*}
where ${\bf u}_0^{\pm}:=(u_0^{\pm},0,0)$ and $(u_0^+,\rho_0^+,p_0^+)$ are positive constants defined by 
\begin{equation*}
\left.\begin{split}
u_0^+&:=\frac{2(\gamma-1)}{(\gamma+1)u_0^-}\left(\frac{1}{2}|u_0^-|^2+\frac{\gamma p_0^-}{(\gamma-1)\rho_0^-}\right),\\
\rho_0^+&:=\frac{\rho_0^-u_0^-}{u_0^+},\quad p_0^+:=\rho_0^-|u_0^-|^2+p_0^--\rho_0^+|u_0^+|^2.
\end{split}\right.
\end{equation*}
Then one can easily check that $U_0$ is a weak solution to \eqref{E-System} in $\mathcal{N}$ with a transonic shock $\mathfrak{S}_0:={\mathcal{N}}\cap\{x=0\}$ (Figure \ref{back}). 
For later use, we set 
\begin{equation}\label{B0}
\begin{split}
&S_0^{\pm}:=\frac{p_0^{\pm}}{(\rho_0^{\pm})^{\gamma}},\quad B_0:=\frac{1}{2}|u_0^-|^2+\frac{\gamma p_0^-}{(\gamma-1)\rho_0^-},\\
&\varphi_0^{\pm}({\bf x}):=u_0^{\pm}x_1\quad\mbox{for}\,\,{\bf x}=(x_1,x_2,x_3)\in\mathcal{N}.
\end{split}
\end{equation}

\begin{figure}[!h]
\centering
\includegraphics[scale=0.68]{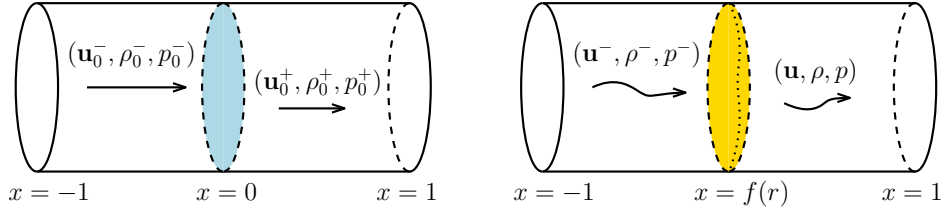}
\caption{Left: Background solution, Right: Problem \ref{Shock-Prob}}\label{back}
\end{figure}

Before we state our problem and main results, we introduce some H\"older norms defined as follows:

(i) ({\emph{Standard H\"older norms}})
 Let $\Omega\subset\mathbb{R}^n$ be an open bounded connected set.
For $\alpha\in(0,1)$ and $m\in\mathbb{Z}^+$, define 
the standard H\"older norms by 
\begin{equation*}
\begin{split}
&\|u\|_{m,\Omega}:=\sum_{0\le|\beta|\le m}\sup_{{\bf x}\in\Omega}|D^{\beta}u({\bf x})|,\quad [u]_{m,\alpha,\Omega}:=\sum_{|\beta|=m}\sup_{\substack{{\bf x,y}\in\Omega,\\{\bf x}\ne{\bf y}}}\frac{|D^{\beta}u({\bf x})-D^{\beta}u({\bf y})|}{|{\bf x}-{\bf y}|^{\alpha}},\\
&\|u\|_{m,\alpha,\Omega}:=\|u\|_{m,\Omega}+[u]_{m,\alpha,\Omega}.
\end{split}
\end{equation*}
Here, $D^{\beta}$ denotes $\partial_{x_1}^{\beta_1}\dots\partial_{x_n}^{\beta_n}$ for a multi-index $\beta=(\beta_1,\ldots,\beta_n)$ with $\beta_j\in\mathbb{Z}^+$ and $|\beta|=\sum_{j=1}^n\beta_j$. 

(ii) ({\emph{Weighted H\"older norms}}) Let $\Gamma$ be a closed portion of $\partial\Omega$. For ${\bf x}, {\bf y}\in\Omega$ and $k\in\mathbb{R}$, set
\begin{equation*}
\delta_{\bf x}:=\inf_{{\bf z}\in\Gamma}|{\bf x}-{\bf z}|\quad\mbox{and}\quad\delta_{{\bf x,y}}:=\min(\delta_{\bf x},\delta_{\bf y}),
\end{equation*}
and define the weighted H\"older norms by 
\begin{eqnarray*}
\begin{split}
&\|u\|_{m,0,\Omega}^{(k,\Gamma)}:=\sum_{0\le|\beta|\le m}\sup_{{\bf x}\in\Omega}\delta_{\bf x}^{\max(|\beta|+k,0)}|D^{\beta}u({\bf x})|,\\
&[u]_{m,\alpha,\Omega}^{(k,\Gamma)}:=\sup_{|\beta|=m}\sup_{\substack{{\bf x,y}\in\Omega,\\{\bf x}\ne{\bf y}}}\delta_{\bf x,y}^{\max(m+\alpha+k,0)}\frac{|D^{\beta}u({\bf x})-D^{\beta}u({\bf y})|}{|{\bf x}-{\bf y}|^{\alpha}},\\
&\|u\|_{m,\alpha,\Omega}^{(k,\Gamma)}:=\|u\|_{m,0,\Omega}^{(k,\Gamma)}+[u]_{m,\alpha,\Omega}^{(k,\Gamma)}.
\end{split}
\end{eqnarray*}
$C^{m,\alpha}_{(k,\Gamma)}(\Omega)$ denotes the completion of the set of all smooth functions whose $\|\cdot\|_{m,\alpha,\Omega}^{(k,\Gamma)}$ norms are finite. 

Our goal is to solve the following problem.
\begin{problem}\label{Shock-Prob}
Let $({\bf u}^-,\rho^-,p^-)$ be an axisymmetric supersonic solution of \eqref{E-System} in $\mathcal{N}$ with $B=B_0$ for a constant $B_0>0$ defined in \eqref{B0}, and suppose that 
\begin{equation*}\label{super-ass}
{\bf u}^-\cdot{\bf e}_r=0\quad\mbox{on}\quad\Gamma_{\rm w}. 
\end{equation*}
Find a weak solution $U=({\bf u},\rho, p)$ to  \eqref{E-System} with a transonic shock
$$\mathfrak{S}_f:\,x=f(r)\quad\mbox{(Figure \ref{back})}$$
in the sense of Definition \ref{definition-wsol} in $\mathcal{N}$ such that we have the following properties:
\begin{itemize}
\item[(a)] (Positivity of density and velocity along $x$-direction)
$$\rho>0\,\quad\mbox{and}\,\quad{\bf u}\cdot{\bf e}_x>\frac{u_0^+}{2}>0\,\quad\mbox{in}\,\quad\overline{\mathcal{N}}.$$
\item[(b)] In $\mathcal{N}_f^-:=\mathcal{N}\cap\{x<f(r)\}$,
$U=({\bf u}^-,\rho^-,p^-)$ holds.
\item[(c)] In $\mathcal{N}_f^+:=\mathcal{N}\cap\{x>f(r)\}$,
 ${|{\bf u}|}<c\,\,\mbox{for the sound speed}\,\, c=\sqrt{\frac{\gamma p}{\rho}}.$
 \item[(d)] 
$U$ satisfies the boundary condition
$${\bf u}\cdot{\bf e}_r=0\quad\mbox{on}\quad\Gamma_{\rm ex}\cup\Gamma_{\rm w}.$$
\item[(e)] 
$U$ satisfies the Rankine-Hugoniot conditions
\begin{equation*}\label{RH-re}
\begin{split}
&[{\bf u}\cdot{\bm\tau}_f]_{\mathfrak{S}_f}=[\rho {\bf u}\cdot{\bf n}_f]_{\mathfrak{S}_f}=[B]_{\mathfrak{S}_f}=[\rho({\bf u}\cdot{\bf n}_f)^2+p]_{\mathfrak{S}_f}=0\quad\mbox{on}\quad\mathfrak{S}_f,
\end{split}
\end{equation*}
where ${\bm \tau}_f$ and ${\bf n}_f$ denote a unit tangent vector field and unit normal vector field on $\mathfrak{S}_f$, respectively.
\end{itemize}
\end{problem}

 Problem \ref{Shock-Prob} can be rewritten as the following free boundary problem:
\begin{problem}\label{re-Prob}
Under the same assumptions of Problem \ref{Shock-Prob}, find a radial function
$$f:B_1(0)\left(:=\left\{{\bf y}\in\mathbb{R}^2:\,|{\bf y}|<1\right\}\right)\longrightarrow(-\frac{1}{4},\frac{1}{4})$$ and a weak solution $U=({\bf u},\rho,p)$ to \eqref{E-System} in $\mathcal{N}_f^+:=\mathcal{N}\cap\{x>f(r)\}$ such that the following properties hold:
\begin{itemize}
\item[(a)] The properties (a) and (c) in Problem \ref{Shock-Prob} hold in $\overline{\mathcal{N}_f^+}$.
\item[(b)] On $\Gamma_{{\rm w},f}^+(:=\partial\mathcal{N}_f^+\cap\Gamma_{\rm w})\cup\Gamma_{\rm ex}$, $U$ satisfies the boundary conditions
\begin{equation*}
{\bf u}\cdot{\bf e}_r=0.
\end{equation*}
\item[(c)] On $\mathfrak{S}_f:\,x=f(r)$, $U$ satisfies the boundary conditions
\begin{equation}\label{RH-re}
\left\{\begin{split}
&{\bf u}\cdot{\bm \tau}_f={\bf u}^-\cdot{\bm\tau}_f,\\
&\rho{\bf u}\cdot{\bf n}_f=\rho^-{\bf u}^-\cdot{\bf n}_f,\\
&\rho({\bf u}\cdot{\bf n}_f)^2+p=\rho^-({\bf u}^-\cdot{\bf n}_f)^2+p^-,
\end{split}\right.
\end{equation}
where ${\bm \tau}_f$ and ${\bf n}_f$ denote a unit tangent vector field and unit normal vector field on $\mathfrak{S}_f$, respectively. 
\item[(d)] The Bernoulli function $B$ is a constant function, 
$$B\equiv B_0\quad\mbox{in}\quad\overline{\mathcal{N}_f^+},$$
where $B_0$ is given in \eqref{B0}.
\end{itemize}
\end{problem}

The main theorem of this paper is as follows:

\begin{theorem}\label{MainThm}
Let $({\bf u}^-,\rho^-,p^-)$ be from Problem \ref{Shock-Prob}.
For any fixed $\alpha\in(\frac{1}{6},1)$, there exists a small constant $\sigma_1>0$ depending only on $(u_0^{\pm},\rho_0^{\pm},p_0^{\pm},\gamma,\alpha)$ so that if
\begin{equation}\label{3D-rem11}
\sigma({\bf u}^-,\rho^-,p^-):=\|({\bf u}^-,\rho^-,p^-)-({\bf u}_0^-,\rho_0^-,p_0^-)\|_{1,\alpha,\mathcal{N}}\le \sigma_1,
\end{equation}
 then there exists an axisymmetric solution $U=({\bf u},\rho,p)$ of Problem \ref{re-Prob} with a transonic shock $x=f(r)$ satisfying
 \begin{equation}\label{Thm2.1-uniq-est}
\|f\|_{2,\alpha,B_1(0)}^{(-1-\alpha,\partial B_1(0))}+\|({\bf u},\rho,p)-({\bf u}_0^+,\rho_0^+,p_0^+)\|_{1,\alpha,\mathcal{N}_f^+}^{(-\alpha,\Gamma_{{\rm w},f}^+)}\le C\sigma,
\end{equation}
where the constant $C>0$ depends only on $(u_0^{\pm},\rho_0^{\pm},p_0^{\pm},\gamma,\alpha)$.
\end{theorem}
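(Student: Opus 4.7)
The plan is to derive Theorem \ref{MainThm} from Theorem \ref{Thm-HD}, which (as indicated in Section \ref{3D-sec-Hel}) solves the Helmholtz-decomposed version of Problem \ref{re-Prob}. The deduction has two directions: any solution of Problem \ref{re-Prob} admits a Helmholtz decomposition whose components satisfy the reformulated system of Theorem \ref{Thm-HD}, and conversely the solution produced by Theorem \ref{Thm-HD} reconstitutes a solution of Problem \ref{re-Prob} satisfying the weighted estimate \eqref{Thm2.1-uniq-est}. I would spend the bulk of this last section on the second direction.

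Concretely, in the downstream region $\mathcal{N}_f^+$ I would write ${\bf u} = \nabla \varphi + {\bf W}$, where ${\bf W}$ is an axisymmetric vector field carrying the vorticity of ${\bf u}$ (so that $\nabla \times {\bf W}$ equals $\nabla \times {\bf u}$). With $B \equiv B_0$ prescribed by Problem \ref{re-Prob}(d), the entropy $S := p/\rho^{\gamma}$ and the angular momentum density $L := r u_\theta$ are transported along streamlines and thus can be recovered from the upstream data on $\mathfrak{S}_f$ via the Rankine-Hugoniot conditions. Bernoulli's identity then gives $\rho$ algebraically from $(|{\bf u}|,S,B_0)$, and the continuity equation becomes a quasilinear elliptic equation for $\varphi$, whose subsonic ellipticity is inherited from the background state under the smallness assumption \eqref{3D-rem11}. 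The Rankine-Hugoniot conditions \eqref{RH-re} split accordingly: the tangential-velocity condition determines the tangential derivative of $\varphi$ along $\mathfrak{S}_f$; the mass-flux condition yields an oblique derivative condition for $\varphi$; and the momentum-flux condition, combined with Bernoulli and the mass-flux condition, reduces to an algebraic equation that, together with the transport equations for $S$ and $L$, pins down $f(r)$. The slip condition on $\Gamma_{{\rm w},f}^+\cup \Gamma_{\rm ex}$ becomes a homogeneous Neumann condition for $\varphi$ after absorbing the radial component of ${\bf W}$.

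Given the data produced by Theorem \ref{Thm-HD}, namely $(\varphi,{\bf W},S,L,f)$ with quantitative bounds in the weighted Hölder norms appearing in \eqref{Thm2.1-uniq-est}, the plan is to define $({\bf u},\rho,p)$ pointwise by $\mathbf u = \nabla\varphi + {\bf W}$, by the algebraic Bernoulli relation, and by $p = S\rho^\gamma$, and then verify two things: (i) that the triple is a classical solution of \eqref{E-System} in $\mathcal{N}_f^+$, and (ii) that all the Rankine-Hugoniot conditions in \eqref{RH-re} hold on $\mathfrak{S}_f$. Point (i) is the main algebraic obstacle: after the decomposition, one of the three momentum components is replaced by Bernoulli, one by the transport of angular momentum, and the remaining conservation laws are encoded in the elliptic equation for $\varphi$ and in the divergence/curl structure of ${\bf W}$; one has to show that the residual in each original momentum equation vanishes, which typically requires computing $\rho\mathbf u\cdot\nabla B$ along with the transport identities for $S$ and $L$ and checking that a certain divergence-free auxiliary quantity is identically zero thanks to the boundary data on $\mathfrak{S}_f$.

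The main obstacle I anticipate, beyond the algebra, is regularity at the corner $\overline{\mathfrak{S}_f}\cap \overline{\Gamma_{\rm w}}$. This is precisely why the weighted space $C^{2,\alpha}_{(-1-\alpha,\partial B_1(0))}$ appears for $f$ and $C^{1,\alpha}_{(-\alpha,\Gamma_{{\rm w},f}^+)}$ for the flow variables in \eqref{Thm2.1-uniq-est}, and the restriction $\alpha\in(1/6,1)$ will presumably originate in the trace and interpolation estimates that close the contraction inside the proof of Theorem \ref{Thm-HD}; in the deduction step here those exponents simply propagate through the algebraic reconstitution. Finally, the bound \eqref{Thm2.1-uniq-est} follows from the corresponding weighted estimate furnished by Theorem \ref{Thm-HD} together with the Lipschitz dependence of the map $(\varphi,{\bf W},S,L)\mapsto ({\bf u},\rho,p)$ on its arguments, which is elementary because each step is either differentiation or a smooth algebraic operation in the regime where $\rho$ stays bounded away from zero.
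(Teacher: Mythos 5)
Your plan matches the paper's: reconstitute $({\bf u},\rho,p)$ in $\mathcal{N}_f^+$ from the Helmholtz-decomposed solution $(f,S,\Lambda,\varphi,\psi)$ of Theorem \ref{Thm-HD} via ${\bf u}={\bf q}(r,\psi,D\psi,D\varphi,\Lambda)$, $\rho=H(S,{\bf u})$, $p=S\rho^\gamma$, and then transfer the weighted estimate. The only difference in emphasis is that the paper's Section~\ref{sec-proof-main} treats the equivalence of \eqref{3D-H} with the original Euler system as already settled in Section~\ref{3D-sec-Hel} and instead spends its short proof explicitly checking, for $\sigma$ small, the positivity $\rho\ge\rho_0^+/2$, the lower bound ${\bf u}\cdot{\bf e}_x\ge u_0^+/2$, the subsonicity $c^2-|{\bf u}|^2>0$, and the admissibility ${\bf u}^-\cdot{\bf n}_f>{\bf u}\cdot{\bf n}_f>0$ required by Definition \ref{definition-wsol}(v)--(vi) and Problem \ref{re-Prob}(a), which your write-up leaves implicit.
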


\section{Reformulation of Problem \ref{re-Prob} via Helmholtz decomposition}\label{3D-sec-Hel}
Suppose that the smooth solution $({\bf u},\rho, p)$ of \eqref{E-System} is axisymmetric with $B=B_0$  for a constant $B_0>0$ defined in \eqref{B0}.
Then 
\begin{equation*}
{\bf u}=u_x(x,r){\bf e}_x+u_r(x,r){\bf e}_r+u_{\theta}(x,r){\bf e}_{\theta},\quad\rho=\rho(x,r),\quad p=p(x,r),
\end{equation*}
and the system \eqref{E-System} can be rewritten as follows:
\begin{equation}\label{3D-ang}
\left\{\begin{split}
&\partial_x(\rho u_x)+\partial_r(\rho u_r)+\frac{\rho u_r}{r}=0,\\
&\rho(u_x\partial_x+u_r\partial_r)u_r-\frac{\rho \Lambda^2}{r^3}+\partial_r (S\rho^{\gamma})=0,\\
&\rho(u_x\partial_x+u_r\partial_r)S=0,\\
&\rho(u_x\partial_x+u_r\partial_r)\Lambda=0.
\end{split}
\right.
\end{equation}
Here, $S:=\frac{p}{\rho^{\gamma}}$ denotes the entropy  and 
\begin{equation*}
\label{definition-Lambda}
\Lambda(x,r):=ru_{\theta}(x,r)
\end{equation*}
denotes the angular momentum density.

For a radial function $f:B_1(0)\longrightarrow(-\frac{1}{4},\frac{1}{4})$ to be determined, 
we express the velocity field ${\bf u}$ as 
\begin{equation*}
{\bf u}=\nabla\varphi+\mbox{curl}{\bf V}\quad\mbox{in}\quad\mathcal{N}_f^+
\end{equation*}
for axisymmetric functions
\begin{equation*}
\varphi({\bf x})=\varphi(x,r),\quad {\bf V}({\bf x})=h(x,r){\bf e}_r+\psi(x,r){\bf e}_{\theta}.
\end{equation*}
Suppose that $(\varphi, {\bf V})$ are $C^2$ in $\mathcal{N}_f^+$.
Then a straightforward computation yields  
\begin{equation}\label{3D-u}
\begin{split}
{\bf u}&=\left(\partial_x\varphi+\frac{1}{r}\partial_r(r\psi)\right){\bf e}_x+(\partial_r\varphi-\partial_x\psi){\bf e}_r+\left(\frac{\Lambda}{r}\right){\bf e}_{\theta}\\
&=:{\bf q}(r,\psi,D\psi,D\varphi,\Lambda)\quad\mbox{for}\quad D=(\partial_x,\partial_r).
\end{split}
\end{equation}
To simplify notations, we set
\begin{equation}\label{def-T}
{\bf t}(r,\psi,D\psi,\Lambda):={\bf q}(r,\psi,D\psi,D\varphi,\Lambda)-\nabla\varphi\left(=\mbox{curl}{\bf V}\right).
\end{equation}
Then, as in \cite[Section 3]{bae2019contact3D}, we can rewrite  \eqref{3D-ang} as a system for $(\varphi,\psi,S,\Lambda)$:
\begin{equation}\label{3D-H}
\left\{\begin{split}
&\mbox{div}\left(H(S,{\bf q}){\bf q}\right)=0,\\
&-\Delta(\psi{\bf e}_{\theta})=G(S,\Lambda,\partial_r S,\partial_r\Lambda,{\bf t},\nabla\varphi){\bf e}_{\theta},\\
&H(S,{\bf q}){\bf q}\cdot\nabla S=0,\\
& H(S,{\bf q}){\bf q}\cdot\nabla \Lambda=0,\\
\end{split}
\right.
\end{equation}
with
\begin{equation*}
{\bf q}={\bf q}(r,\psi,D\psi,D\varphi,\Lambda),\quad\text{and}\quad {\bf t}={\bf t}(r,\psi,D\psi,\Lambda)
\end{equation*}
for $(H, G)$ defined by
\begin{equation}\label{def-H-G}
\left.\begin{split}
&H(\eta,{\bf q}):=\left[\frac{\gamma-1}{\gamma\eta}\left(B_0-\frac{1}{2}|{\bf q}|^2\right)\right]^{1/(\gamma-1)},\\
&G(\eta_1,\eta_2,\eta_3,\eta_4,{\bf t},{\bf v}):=\frac{1}{({\bf t}+{\bf v})\cdot{\bf e}_x}\left(\frac{H^{\gamma-1}(\eta_1, {\bf t}+{\bf v})}{\gamma-1}\eta_3+\frac{\eta_2}{r^2}\eta_4\right)
\end{split}
\right.
\end{equation}
for $\eta\in\mathbb{R}$, ${\bf q}\in\mathbb{R}^3$, $\eta_1,\eta_2,\eta_3,\eta_4\in\mathbb{R}$, and ${\bf t},{\bf v}\in\mathbb{R}^3$.

For an axisymmetric supersonic solution $({\bf u}^-,\rho^-,p^-)$ given in Problem \ref{Shock-Prob}, we find $(S^-,\Lambda^-,\varphi^-,\psi^-)$ such that 
\begin{equation*}
{\bf u}^-={\bf q}(r,\psi^-,D\psi^-,D\varphi^-,\Lambda^-),\quad \rho^-=H(S^-,{\bf u}^-),\quad p^-=S^-(\rho^-)^{\gamma}\quad\mbox{in}\quad\mathcal{N}.
\end{equation*}
For that purpose, we first solve the following linear boundary value problem:
\begin{equation}\label{psi-super}
-\Delta{\bf W}^-=\left(\partial_x ({\bf u}^-\cdot{\bf e}_r)-\partial_r ({\bf u}^-\cdot{\bf e}_x)\right){\bf e}_{\theta}\quad\mbox{in}\quad\mathcal{N}
\end{equation}
with boundary conditions
\begin{equation}\label{BC-super}
\partial_x{\bf W}^-={\bf 0}\quad\mbox{on}\quad \Gamma_{\rm en}\cup\Gamma_{\rm ex}, \quad {\bf W}^-={\bf 0}\quad\mbox{on}\quad\Gamma_{\rm w}\cup\{r=0\}.
\end{equation}
By  \cite[Proposition 3.3]{bae20183}, 
the unique axisymmetric solution ${\bf W}^-\in C^{2,\alpha}(\overline{\mathcal{N}})$ of the boundary value problem \eqref{psi-super}-\eqref{BC-super} has the form of  
\begin{equation*}\label{psi--}
{\bf W}^-=\psi^-(x,r){\bf e}_{\theta}
\end{equation*}
and the axisymmetric function $\psi^-$ is $C^2$ as a function of $(x,r)$ in a two-dimensional rectangle $(-1,1)\times(0,1)$.
With such $\psi^-$, we define axisymmetric functions $(\varphi^-,S^-,\Lambda^-)$ by 
\begin{equation}\label{super-HD}
\begin{split}
&\varphi^-(x,r):=\int_0^x \left({\bf u}^-\cdot{\bf e}_x-\frac{1}{r}\partial_r(r\psi^-)\right)(y,r)dy,\\
&S^-(x,r):=\frac{p^-}{(\rho^-)^{\gamma}}(x,r),\quad\Lambda^-(x,r):=r{\bf u}^-\cdot{\bf e}_{\theta}(x,r)\quad\mbox{in}\quad\mathcal{N}.
\end{split}
\end{equation}
Then it follows from \eqref{psi-super} and \eqref{super-HD} that 
\begin{equation*}
\partial_r\varphi^-={\bf u}^-\cdot{\bf e}_r+\partial_x\psi^-\quad\mbox{in}\quad\mathcal{N}.
\end{equation*}
By direct computations, one can check that 
 there exists a constant $C_{\ast}>0$ depending only on $(u_0^-,\rho_0^-,p_0^-,\gamma,\alpha)$ such that 
\begin{equation}\label{const}
\sigma(S^-,\Lambda^-,\varphi^-,\psi^- )\le C_{\ast}\sigma({\bf u}^-,\rho^-,p^-),
\end{equation}
where $\sigma({\bf u}^-,\rho^-,p^-)$ is defined in \eqref{3D-rem11} and 
\begin{equation*}\label{pertur}
\sigma(S^-,\Lambda^-,\varphi^-,\psi^- ):=\|(S^-,\Lambda^-)-(S_0^-,0)\|_{1,\alpha,\mathcal{N}}+\|\varphi^--\varphi_0^-\|_{2,\alpha,\mathcal{N}}+\|\psi^-{\bf e}_{\theta}\|_{2,\alpha,\mathcal{N}}
\end{equation*}
for
$(S_0^-,\varphi_0^-)$ defined by \eqref{B0}.

Now we derive boundary conditions for $(f,S,\Lambda,\varphi,\psi)$ to satisfy the conditions (b)-(c) in Problem \ref{re-Prob}.

{{(i) Boundary conditions on $\Gamma_{{\rm w},f}^+\cup\Gamma_{\rm ex}$: }} 

We prescribe boundary conditions for $(\varphi,\psi)$ on $\Gamma_{{\rm w},f}^+\cup\Gamma_{\rm ex}$ as
\begin{equation}\label{w-ex-bd}
\left\{\begin{split}
\partial_r\varphi=0\quad\mbox{and}\quad \psi=0\quad&\mbox{on}\quad\Gamma_{{\rm w},f}^+,\\
\varphi=\varphi_0^+\quad\mbox{and}\quad \partial_x\psi=0\quad&\mbox{on}\quad\Gamma_{\rm ex}
\end{split}\right.
\end{equation}
for $\varphi_0^+$ given by \eqref{B0}
so that the boundary condition (b) in Problem \ref{re-Prob} holds on $\Gamma_{{\rm w},f}^+\cup\Gamma_{\rm ex}$.

\smallskip

{{(ii) The Rankine-Hugoniot conditions on $\mathfrak{S}_f$: }}  

In terms of $(f,S,\Lambda, \varphi,\psi)$,  the Rankine-Hugoniot conditions in \eqref{RH-re} become 
\begin{eqnarray}
&\label{HD-RH1}&{\bf q}\cdot{\bm\tau}_{f}={\bf u}^-\cdot{\bm\tau}_f,\quad{\bf q}\cdot{\bf e}_{\theta}={\bf u}^-\cdot{\bf e}_{\theta},\\
&\label{HD-RH2}&H(S,{\bf q})({\bf q}\cdot{\bf n}_f)=\rho^-({\bf u}^-\cdot{\bf n}_f),\\
&\label{HD-RH3}&H(S,{\bf q})({\bf q}\cdot{\bf n}_f)^2+SH^{\gamma}(S,{\bf q})=\rho^-({\bf u}^-\cdot{\bf n}_f)^2+p^-
\end{eqnarray}
for
\begin{equation*}
{\bm \tau}_{f}=\frac{f'(r){\bf e}_x+{\bf e}_r}{\sqrt{1+|f'(r)|^2}},\quad
{\bf n}_{f}=\frac{{\bf e}_x-f'(r){\bf e}_r}{\sqrt{1+|f'(r)|^2}}.
\end{equation*}
We derive the conditions of $(f,S,\Lambda,\varphi,\psi)$ to satisfy \eqref{HD-RH1}-\eqref{HD-RH3}.

On $\mathfrak{S}_f$, if $(f,\Lambda,\varphi,\psi)$ satisfy
\begin{equation}\label{v-sh}
\left.\begin{split}
&\varphi=\varphi^-,\quad\Lambda=\Lambda^-,\\
&-\nabla(\psi{\bf e}_{\theta})\cdot{\bf n}_f=\left(\frac{\left(-\frac{1}{r}\psi+\frac{1}{r}\psi^-+\partial_r\psi^-\right)f'(r)-\partial_x\psi^-}{\sqrt{1+|f'(r)|^2}}\right){\bf e}_{\theta},
\end{split}\right.
\end{equation}
then we have 
\begin{equation*}
\nabla\varphi\cdot{\bm\tau}_f=\nabla\varphi^-\cdot{\bm\tau}_f,\quad{\bf t}\cdot{\bm\tau}_f=\frac{1}{r}\partial_r(r\psi^-)f'(r)-\partial_x\psi^-,\quad{\bf q}\cdot{\bf e}_{\theta}=\frac{\Lambda^-}{r},
\end{equation*}
from which we get the conditions in \eqref{HD-RH1}.
Note that the first condition in \eqref{v-sh} is equivalent to 
\begin{equation*}\label{free-cond}
f(r)=\frac{(\varphi-\varphi_0^+)-(\varphi^--\varphi_0^-)}{u_0^--u_0^+}(f(r),r)\quad\mbox{for}\quad 0\le r\le 1.
\end{equation*}
This will be used to find the location of the transonic shock $\mathfrak{S}_f$ (See Lemma \ref{shock-lemma}).


On $\mathfrak{S}_f$, if $(f,S,\Lambda,\varphi,\psi)$ satisfy 
\begin{eqnarray}
&\nonumber&\nabla\varphi\cdot{\bf n}_f=\frac{K_s(f')}{{\bf u}^-\cdot{\bf n}_f}-{\bf t}\cdot{\bf n}_f,\\
&\label{S-shock}&S=\left(\rho^-({\bf u}^-\cdot{\bf n}_f)^2+p^--\rho^-K_s(f')\right)\left(\frac{\rho^-({\bf u}^-\cdot{\bf n}_f)^2}{K_s(f')}\right)^{-\gamma}
\end{eqnarray}
for 
\begin{equation}\label{def-KS}
K_s(f'):=\frac{2(\gamma-1)}{\gamma+1}\left(\frac{1}{2}|{\bf u^-}\cdot{\bf n}_f|^2+\frac{\gamma p^-}{(\gamma-1)\rho^-}\right),
\end{equation}
then one can directly check that 
\begin{eqnarray}
&\label{KS}&{\bf q}\cdot{\bf n}_f=\frac{K_s(f')}{{\bf u}^-\cdot{\bf n}_f},\\
&\label{S-RH}&\rho^-K_s(f')+S\left(\frac{\rho^-({\bf u}^-\cdot{\bf n}_f)^2}{K_s(f')}\right)^{\gamma}=\rho^-({\bf u}^-\cdot{\bf n}_f)^2+p^-.
\end{eqnarray}
By \eqref{S-shock}-\eqref{KS} and the definition of $H$ in \eqref{def-H-G}, we have
 \begin{equation}\label{rho-shock}
 H(S,{\bf q})=\frac{\rho^-({\bf u}^-\cdot{\bf n}_f)^2}{K_s(f')}\quad\mbox{on}\quad\mathfrak{S}_f.
 \end{equation}
Then it follows from \eqref{KS} and \eqref{rho-shock} that the condition in \eqref{HD-RH2} holds.
Finally, the condition in \eqref{HD-RH3} holds by \eqref{HD-RH2} and \eqref{KS}-\eqref{rho-shock}.

We gather all the boundary conditions for $(f,S,\Lambda,\varphi,\psi)$ on $\mathfrak{S}_f$ as follows: 
\begin{equation}\label{boundary-HD}
\left\{
\begin{split}
	\begin{split}
	&S=S_{\rm sh}(f'),\quad \Lambda=\Lambda^-,\quad\varphi=\varphi^-,\\
	&-\nabla\varphi\cdot{\bf n}_f=-\frac{K_s(f')}{{\bf u}^-\cdot{\bf n}_f}+{\bf t}\cdot{\bf n}_f,\\
	&-\nabla(\psi{\bf e}_{\theta})\cdot{\bf n}_f=\mathcal{A}(r,\psi,f'){\bf e}_{\theta}\qquad\mbox{on}\quad\mathfrak{S}_f,
	\end{split}
\end{split}\right.
\end{equation} 
with 
\begin{equation}\label{def-A}
\begin{split}
&S_{\rm sh}(f'):=\left(\rho^-({\bf u}^-\cdot{\bf n}_f)^2+p^--\rho^-K_s(f')\right)\left(\frac{\rho^-({\bf u}^-\cdot{\bf n}_f)^2}{K_s(f')}\right)^{-\gamma},\\
&\mathcal{A}(r,\psi,f'):=\frac{\left(-\frac{1}{r}\psi+\frac{1}{r}\psi^-+\partial_r\psi^-\right)f'(r)-\partial_x\psi^-}{\sqrt{1+|f'(r)|^2}}.
\end{split}
\end{equation}


\begin{theorem}\label{Thm-HD}
Let $({\bf u}^-,\rho^-,p^-)$ be from Problem \ref{Shock-Prob}.
For simplicity of notations,  let $\sigma$ denote  $\sigma({\bf u}^-,\rho^-,p^- )$ defined in \eqref{3D-rem11}.
Then, for any $\alpha\in(\frac{1}{6},1)$, there exists a small constant $\sigma_2>0$ depending only on 
\sloppy$(u_0^{\pm},\rho_0^{\pm},p_0^\pm,\gamma,\alpha)$ so that if
\begin{equation*}\label{enu-sigma}
\sigma\le\sigma_2,
\end{equation*}
then the free boundary problem \eqref{3D-H} with boundary conditions \eqref{w-ex-bd} and \eqref{boundary-HD} has an axisymmetric solution $(f, S, \Lambda, \varphi, \psi)$ that satisfies
\begin{equation}\label{Thm-HD-est}
\begin{split}
\|f\|_{2,\alpha,B_1(0)}^{(-1-\alpha,\partial B_1(0))}
&+\|(S,\Lambda)-(S_0^+,0)\|_{1,\alpha,\mathcal{N}_f^+}^{(-\alpha,\Gamma_{{\rm w},f}^+)}\\
&+\|\varphi-\varphi_0^+\|_{2,\alpha,\mathcal{N}_f^+}^{(-1-\alpha,\Gamma_{{\rm w},f}^+)}+\|\psi{\bf e}_{\theta}\|_{2,\alpha,\mathcal{N}_f^+}^{(-1-\alpha,\Gamma_{{\rm w},f}^+)}\le C\sigma,
\end{split}
\end{equation}
where $(S_0^+,\varphi_0^+)$ are given in \eqref{B0} and the constant $C>0$ depends only on $(u_0^{\pm},\rho_0^{\pm},p_0^\pm,\gamma,\alpha)$.
\end{theorem}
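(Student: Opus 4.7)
The plan is to construct the solution $(f,S,\Lambda,\varphi,\psi)$ by a Schauder fixed-point iteration on a closed convex ball $\mathcal{K}_{M\sigma}$ of radius $M\sigma$ in the axisymmetric weighted H\"older space whose norm is the left-hand side of \eqref{Thm-HD-est}. The decisive feature unlocked by the Helmholtz decomposition is that the Rankine-Hugoniot conditions \eqref{boundary-HD} split into four decoupled pieces that match four distinct subproblems: the Dirichlet condition $\varphi=\varphi^-$ (locating the free boundary $\mathfrak{S}_f$), an oblique Neumann condition for the quasilinear elliptic $\varphi$-equation, an oblique condition for the linear elliptic $\psi$-equation, and transport data $S=S_{\rm sh}(f')$, $\Lambda=\Lambda^-$ that propagate along streamlines.

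Given an iterate $(\hat f,\hat S,\hat\Lambda,\hat\varphi,\hat\psi)\in\mathcal{K}_{M\sigma}$, I update the unknowns in four substeps. First, since $\hat{\bf q}\cdot{\bf e}_x>u_0^+/2$ on $\overline{\mathcal{N}_{\hat f}^+}$, the transport equations $\hat{\bf q}\cdot\nabla S=\hat{\bf q}\cdot\nabla\Lambda=0$ with Dirichlet data on $\mathfrak{S}_{\hat f}$ are solved along streamlines and yield $(S,\Lambda)\in C^{1,\alpha}_{(-\alpha,\Gamma^+_{{\rm w},\hat f})}$ with norm bounded by $C\sigma$. Second, the equation $-\Delta(\psi{\bf e}_\theta)=G(\cdots){\bf e}_\theta$ with the mixed boundary conditions from \eqref{w-ex-bd}-\eqref{boundary-HD} is linear elliptic and is solved using Schauder theory for the axisymmetric operator $-\partial_x^2-\partial_r^2-r^{-1}\partial_r+r^{-2}$ on the piecewise smooth domain, following \cite[Prop.~3.3]{bae20183}. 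Third, the continuity equation
\begin{equation*}
\mathrm{div}\bigl(H(\hat S,{\bf q}(r,\psi,D\psi,D\varphi,\hat\Lambda))\,{\bf q}(r,\psi,D\psi,D\varphi,\hat\Lambda)\bigr)=0
\end{equation*}
is a quasilinear uniformly elliptic equation for $\varphi$ (subsonic because $|{\bf q}|<c$); with Dirichlet data on $\Gamma_{\rm ex}$, slip data on $\Gamma^+_{{\rm w},\hat f}$, and oblique Neumann data on $\mathfrak{S}_{\hat f}$, linearization about $\varphi_0^+$ and standard elliptic theory in the weighted H\"older space produce a unique small solution. Fourth, $f$ is updated by solving the implicit equation
\begin{equation*}
f(r)=\frac{(\varphi-\varphi_0^+)-(\varphi^--\varphi_0^-)}{u_0^--u_0^+}(f(r),r),
\end{equation*}
which is a strict contraction in $f(r)$ for each fixed $r$ since $u_0^->u_0^+$ and the numerator is $O(\sigma)$.

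The four substeps define a map $\mathcal{T}:\mathcal{K}_{M\sigma}\to\mathcal{K}_{M\sigma}$ for $M$ large and $\sigma_2$ small. Continuity of $\mathcal{T}$ in a weaker H\"older norm, together with compactness of the embedding of $\mathcal{K}_{M\sigma}$ into that weaker-norm space, allows invocation of Schauder's fixed point theorem; any fixed point is the desired solution of \eqref{3D-H}, \eqref{w-ex-bd}, \eqref{boundary-HD}, and the estimate \eqref{Thm-HD-est} is inherited from the self-mapping bounds.

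The principal obstacle I expect is the corner singularity at the edge $\mathfrak{S}_f\cap\Gamma_{\rm w}$, where the oblique Neumann data on $\mathfrak{S}_f$ meets the slip condition on $\Gamma_{\rm w}$: the weighted norms in \eqref{Thm-HD-est} are tailored precisely to absorb this corner behavior, but the weight indices must be tracked with care through the nonlinear product $H(S,{\bf q}){\bf q}$ in the $\varphi$-equation and through the division by ${\bf u}^-\cdot{\bf n}_f$ in the shock conditions. The lower bound $\alpha>\tfrac16$ likely arises from balancing the Schauder gain against the corner loss in the composite nonlinear estimates. A secondary subtlety is that the streamline solutions $(S,\Lambda)$ must retain enough regularity at the corner to feed the coefficient $G$ in the $\psi$-equation, which uses the transversality of streamlines to both $\mathfrak{S}_{\hat f}$ and $\Gamma_{\rm ex}$.
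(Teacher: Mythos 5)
Your decomposition into four subproblems is the right one — the transport system for $(S,\Lambda)$, the linear elliptic system for $\psi{\bf e}_\theta$, the quasilinear elliptic equation for $\varphi$, and the implicit equation for $f$ — but the paper does not solve them in one flat sweep. It uses a three-level \emph{nested} fixed-point scheme: an outer iteration on $\psi$ (Section~\ref{sec-Thm-hd}), a middle iteration on $(S,\Lambda)$ for fixed $\psi_\ast$ (Lemma~\ref{lemma2}), and an innermost iteration on $\phi=\varphi-\varphi_0^+$ that simultaneously locates $f$ via Lemma~\ref{shock-lemma} (Lemma~\ref{lemma-free}). This nesting is what lets the constants $M_1, M_2, M_3$ in the iteration sets \eqref{Ite-set-phi}--\eqref{Ite-set-psi} be fixed in a triangular order — first $M_3$ from $C_\flat$, then $M_2$ from $(C_{\sharp\sharp},C_{\sharp\sharp\sharp})$, then $M_1=4C_\star(1+M_2+M_3)$ — and makes each self-map and each uniqueness argument close cleanly. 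A single flat iteration could in principle be made to work, but your plan as stated has two concrete defects.

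The first is that you place the iteration set $\mathcal{K}_{M\sigma}$ in a space whose norm is the left side of \eqref{Thm-HD-est}, i.e., the functions live on $\mathcal{N}_f^+$ — a domain that depends on the iterate through $f$. That is not a subset of any fixed Banach space, so Schauder's fixed-point theorem does not apply directly, and compactness of the embedding into a weaker-norm space does not even make sense. The paper resolves this by the explicit reflection-type extension $\mathcal{E}_f$ of \eqref{extension} to the fixed domain $\mathcal{N}_{-1/2}^+$, and by defining \eqref{Ite-set-phi}--\eqref{Ite-set-psi} as subsets of H\"older spaces on $\mathcal{N}_{-1/2}^+$ (or $\Omega_{-1/2}^+$). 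Your proposal omits this device. The second, and more substantive, defect is the ordering of your substeps. You propose solving the transport equations \emph{first}, using the stale velocity $\hat{\bf q}$. The regularity of $(S,\Lambda)$ near the axis $r=0$ and the corner $\mathfrak{S}_f\cap\Gamma_{\rm w}$ comes from the streamfunction construction \eqref{def-Rsh}--\eqref{def-w}, which rests on the divergence identity \eqref{div-trans}; this identity is precisely the continuity equation $\mathrm{div}(H(S_\ast,{\bf q}_\star){\bf q}_\star)=0$, available only \emph{because the $\varphi$-problem has already been solved with the same frozen coefficients} (step~1 of the proof of Lemma~\ref{lemma2}). Under your ordering $\mathrm{div}(H(\hat S,\hat{\bf q})\hat{\bf q})\neq 0$ in general, $w$ in \eqref{def-w} is no longer a streamfunction, and the estimate you want for $(S,\Lambda)$ in $C^{1,\alpha}_{(-\alpha,\Gamma^+_{{\rm w},f})}$ does not follow from \cite[Prop.~3.5]{bae20183}. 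The fix — solve the $\varphi$-problem before the transport problem — is exactly what the nested structure enforces.
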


Hereafter, a constant $C$ is said to be chosen depending only on the data if $C$ is chosen depending only on $(u_0^{\pm},\rho_0^{\pm},p_0^{\pm},\gamma,\alpha)$.
Unless otherwise specified, each estimate constant $C$ is regarded to be depending only on the data for the rest of the paper. 

\section{Proof of Theorem \ref{Thm-HD}}\label{sec-proof-HD}

This section is devoted to proving Theorem \ref{Thm-HD} by using the method of iteration.
In Section \ref{sec-sets}, we first define iterations sets and prove one lemma which concerns the location of the transonic shock.
In Sections \ref{sec-lem1}-\ref{sec-lem2}, we prove two lemmas that will be used to prove Theorem \ref{Thm-HD}.
Finally, in Section \ref{sec-Thm-hd}, we prove Theorem \ref{Thm-HD}.

\subsection{Iteration sets}\label{sec-sets}
For a fixed $\alpha\in(\frac{1}{6},1)$, we define  iteration sets as follows:\\
(i) Iteration set for $\varphi-\varphi_0^+(=:\phi)$: For $M_1>0$ to be determined later, we define
\begin{equation}\label{Ite-set-phi}
\mathcal{I}(M_1):=\left\{\phi=\phi(x,r)\in C_{(-1-\alpha,\Gamma_{\rm w})}^{2,\alpha}({\mathcal{N}_{-1/2}^+}):\begin{split}
	&\|\phi\|_{2,\alpha,\mathcal{N}_{-1/2}^+}^{(-1-\alpha,\Gamma_{\rm w})}\le M_1\sigma
	\end{split}\right\},
\end{equation}
where $\mathcal{N}_{-1/2}^+:=\mathcal{N}\cap\{-\frac{1}{2}<x<1\}$. \newline
(ii) Iteration set for $(S,\Lambda)$: For $M_2>0$ to be determined later, we define
\begin{equation}\label{Ite-set-T}
\mathcal{I}(M_2):=\mathcal{I}_1(M_2)\times\mathcal{I}_2(M_2)
\end{equation}
with
\begin{equation*}
\begin{split}
	&\mathcal{I}_1(M_2):=\left\{S=S(x,r)\in C_{(-\alpha,\Gamma_{\rm w})}^{1,\alpha}({\mathcal{N}_{-1/2}^+}) :\, \|S-S_0^+\|^{(-\alpha,\Gamma_{\rm w})}_{1,\alpha,\mathcal{N}_{-1/2}^+}\le M_2\sigma\right\},\\
	&\mathcal{I}_2(M_2):=\left\{\Lambda=r\mathcal{V}(x,r)\in C_{(-\alpha,\Gamma_{\rm w})}^{1,\alpha}({\mathcal{N}_{-1/2}^+}):
		\begin{split}
			&\|\mathcal{V}\|^{(-\alpha,\{r=1\})}_{1,\alpha,\Omega_{-1/2}^+}\le M_2\sigma,\\
			&\mathcal{V}(x,0)=0, \forall x\in[-\frac{1}{2},1]
		\end{split}\right\},
\end{split}
\end{equation*}
where $\Omega_{-1/2}^+$ is a two-dimensional rectangular domain defined by
\begin{equation*}
\Omega_{-1/2}^+:=\left\{(x,r)\in\mathbb{R}^2:\, -\frac{1}{2}<x<1,\, 0<r<1\right\}.
\end{equation*}
(iii) Iteration set for $\psi$: For $M_3>0$ to be determined later, we define
\begin{equation}\label{Ite-set-psi}
\mathcal{I}(M_3):=\left\{\psi=\psi(x,r)\in C_{(-1-\alpha,\{r=1\})}^{2,\alpha}({\Omega_{-1/2}^+}):\begin{split}
	&\|\psi\|^{(-1-\alpha,\{r=1\})}_{2,\alpha,\Omega_{-1/2}^+}\le M_3\sigma,\\
	&\psi(x,1)=0,\,\partial_r^k\psi(x,0)=0\\
	&\mbox{for }k=0,2,\,\forall x\in[-\frac{1}{2},1]\\
	\end{split}
	\right\}.
\end{equation}

\begin{lemma}\label{shock-lemma}
For a constant $C_{\ast}>0$ in \eqref{const}, suppose that 
\begin{equation}\label{sigma-free}
\sigma\le\frac{u_0^--u_0^+}{8(M_1+C_{\ast})}(=:\sigma_3).
\end{equation}
Then, for each $\phi\in\mathcal{I}(M_1)$, there exists a radial function $f:B_1(0)\longrightarrow(-\frac{1}{4},\frac{1}{4})$ such that 
\begin{equation*}
f(r)=\frac{\phi-(\varphi^--\varphi_0^-)}{u_0^--u_0^+}(f(r),r).
\end{equation*}
Moreover, $f$ satisfies the estimate
\begin{equation*}
\|f\|_{2,\alpha,B_1(0)}^{(-1-\alpha,\partial B_1(0))}\le C(1+M_1)\sigma
\end{equation*}
for a constant $C>0$ depending only on the data.
\end{lemma}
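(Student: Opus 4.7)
The plan is to set up the equation $x=T_r(x)$ at each radius $r\in[0,1]$, where
\[
T_r(x):=\frac{\phi(x,r)-(\varphi^--\varphi_0^-)(x,r)}{u_0^--u_0^+},
\]
and apply the Banach contraction principle on the interval $[-1/4,1/4]$. For $\phi\in\mathcal{I}(M_1)$ the weight exponent $-1-\alpha$ is inactive for derivatives of order $\le 1$, so $\|\phi\|_{C^1(\mathcal{N}_{-1/2}^+)}\le M_1\sigma$; combined with $\|\varphi^--\varphi_0^-\|_{1,\alpha,\mathcal{N}}\le C_{\ast}\sigma$ from \eqref{const}, the smallness hypothesis \eqref{sigma-free} yields
\[
\sup_{x,r}|T_r(x)|+\sup_{x,r}|\partial_x T_r(x)|\le \frac{(M_1+C_{\ast})\sigma}{u_0^--u_0^+}\le \frac{1}{8}.
\]
Hence each $T_r$ sends $[-1/4,1/4]$ into $[-1/8,1/8]$ with Lipschitz constant $\le 1/8$, producing a unique fixed point $f(r)\in(-1/4,1/4)$. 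Because $\phi$ and $\varphi^-$ are axisymmetric, $T_r$ depends only on $r$, so the resulting $f$ is automatically a radial function on $B_1(0)$.

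Next I would prove regularity and quantitative estimates by implicit differentiation of $f(r)=T_r(f(r))$. The first derivative relation
\[
\bigl[(u_0^--u_0^+)-\partial_x(\phi-(\varphi^--\varphi_0^-))(f(r),r)\bigr]\,f'(r)=\partial_r(\phi-(\varphi^--\varphi_0^-))(f(r),r)
\]
has bracket bounded below by $\tfrac{7}{8}(u_0^--u_0^+)$ by the same smallness argument, and numerator of size $O((1+M_1)\sigma)$, giving $\|f'\|_{C^0}\le C(1+M_1)\sigma$. A second differentiation expresses $f''(r)$ as an explicit algebraic combination of first and second derivatives of $\phi-(\varphi^--\varphi_0^-)$ at $(f(r),r)$, with denominator uniformly bounded away from zero.

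The main (and essentially only) subtlety is to translate weighted norms correctly. Weights on $f$ are measured to $\partial B_1(0)$, while weights on $\phi$ are measured to $\Gamma_{\rm w}=\{r=1\}$. Since $|f(r)|<1/4$ keeps $(f(r),r)$ strictly interior in the $x$-direction, the distance of $(f(r),r)$ to $\Gamma_{\rm w}$ is comparable to $1-r=\operatorname{dist}(r,\partial B_1(0))$. This comparability transfers the weight $\delta^{1-\alpha}$ that controls $D^2\phi$ (and the weight $\delta$ that controls its $\alpha$-Hölder seminorm) directly onto the matching weights for $f''$. The contribution from $\varphi^--\varphi_0^-$ is dominated by its unweighted $C^{1,\alpha}$-bound in \eqref{const}, which is strictly stronger and poses no difficulty. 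The hard part will be the bookkeeping of weights through the composition $r\mapsto(f(r),r)$, where the already-established interior bounds on $f$ and $f'$ are used to control the inner-function Hölder behavior; summing the resulting estimates yields the bound $\|f\|_{2,\alpha,B_1(0)}^{(-1-\alpha,\partial B_1(0))}\le C(1+M_1)\sigma$.
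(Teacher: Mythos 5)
Your proposal is correct and follows essentially the same route as the paper: the paper checks that $\phi+\varphi_0^+-\varphi^-$ is strictly decreasing in $x$ and changes sign on $[-\tfrac14,\tfrac14]$, then invokes the implicit function theorem, which is interchangeable with your Banach-contraction reformulation of the same scalar equation, and both proofs obtain the weighted estimate by implicit differentiation. The only (minor) slip is that the sum $\sup|T_r|+\sup|\partial_x T_r|$ is bounded by $\tfrac{2(M_1+C_\ast)\sigma}{u_0^--u_0^+}\le\tfrac14$ rather than $\tfrac18$, which still suffices; the weight-comparability observation $\operatorname{dist}((f(r),r),\Gamma_{\rm w})\sim 1-r$ is the right ingredient for the $C^{2,\alpha}$ bookkeeping, and the paper leaves that step equally brief.
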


\begin{proof}
For $\sigma\le \sigma_3$, one can directly check that 
\begin{eqnarray}
&\nonumber&\partial_x(\phi+\varphi_0^+-\varphi^-)=\partial_x(\phi+\varphi_0^--\varphi^-)-(u_0^--u_0^-)\le -\frac{u_0^--u_0^+}{2}<0,\quad\\
&\nonumber&(\phi+\varphi_0^+-\varphi^-)(-\frac{1}{4},r)\ge\frac{u_0^--u_0^+}{8(M_1+C_{\ast})}>0,\quad\forall r\in[0,1],\\
&\nonumber&(\phi+\varphi_0^+-\varphi^-)(\frac{1}{4},r)\le-\frac{u_0^--u_0^+}{8(M_1+C_{\ast})}<0,\quad\forall r\in[0,1].
\end{eqnarray}
Then the implicit function theorem implies that there exists a radial function $f:B_1(0)\longrightarrow(-\frac{1}{4},\frac{1}{4})$ satisfying
\begin{equation}\label{impli}
(\phi+\varphi_0^+-\varphi^-)(f(r),r)=0
\end{equation}
and 
\begin{equation*}
\|f\|_{2,\alpha,B_1(0)}^{(-1-\alpha,\partial B_1(0))}\le C(1+M_1)\sigma.
\end{equation*}
By the definitions of $\varphi_0^\pm$ given in \eqref{B0}, \eqref{impli} is equivalent to 
\begin{equation*}\label{f-diff}
f(r)=\frac{\phi-(\varphi^--\varphi_0^-)}{u_0^--u_0^+}(f(r),r).
\end{equation*}
The proof is completed.
\end{proof}

\subsection{Free boundary problem for $\varphi$}\label{sec-lem1}
In this subsection, we solve the following free boundary problem.
\begin{problem}\label{free-Pro}
For each  $(S_{\ast},\Lambda_{\ast},\psi_{\ast})\in\mathcal{I}(M_2)\times\mathcal{I}(M_3)$,  
set 
\begin{equation*}
{\bf q}_{\ast}:={\bf q}(r,\psi_{\ast},D\psi_{\ast},D\varphi,\Lambda_{\ast}),\quad{\bf t}_{\ast}:={\bf t}(r,\psi_{\ast},D\psi_{\ast},\Lambda_{\ast})
\end{equation*}
for $({\bf q},{\bf t})$ given by \eqref{3D-u} and \eqref{def-T}, respectively. Find $(f,\varphi)$ satisfying 
\begin{equation}\label{varphi-eq}
\left\{\begin{split}
\mbox{div\,}(H(S_{\ast},{\bf q}_{\ast}){\bf q}_{\ast})=0\quad&\mbox{in}\quad\mathcal{N}_{f}^+,\\
\varphi=\varphi^-,\quad-\nabla\varphi\cdot{\bf n}_{f}=-\frac{K_s(f')}{{\bf u}^-\cdot{\bf n}_f}+{\bf t}_{\ast}\cdot{\bf n}_f\quad&\mbox{on}\quad\mathfrak{S}_{f},\\
\partial_r\varphi=0\quad&\mbox{on}\quad\Gamma_{{\rm w},f}^+,\\
\varphi=\varphi_0^+\quad&\mbox{on}\quad\Gamma_{\rm ex},
\end{split}\right.
\end{equation}
where $H$ and $K_s(f')$ are given in \eqref{def-H-G} and \eqref{def-KS}, respectively.
\end{problem}

\begin{lemma}\label{lemma-free}
There exists a small constant $\sigma_4>0$ depending only on the data and $(M_2, M_3)$ so that if 
$$\sigma\le\sigma_4,$$
then, for each  $(S_{\ast},\Lambda_{\ast},\psi_{\ast})\in\mathcal{I}(M_2)\times\mathcal{I}(M_3)$,  Problem \ref{free-Pro}  has a unique axisymmetric solution $(f,\varphi)$ that satisfies
\begin{equation}\label{lem2-est}
\|f\|_{2,\alpha,B_1(0)}^{(-1-\alpha,\partial B_1(0))}+\|\varphi-\varphi_0^+\|^{(-1-\alpha,\Gamma_{{\rm w},f}^+)}_{2,\alpha,\mathcal{N}_f^+}\le C(1+M_2+M_3)\sigma,
\end{equation}
where the constant $C>0$ depends only on the data.
\end{lemma}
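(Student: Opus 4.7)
The plan is to decouple the free boundary from the equation by an iteration built on Lemma~\ref{shock-lemma}. Given a candidate $\tilde\phi = \tilde\varphi - \varphi_0^+ \in \mathcal{I}(M_1)$, I would first apply Lemma~\ref{shock-lemma} to produce the associated shock profile $\tilde f$ satisfying $\|\tilde f\|_{2,\alpha,B_1(0)}^{(-1-\alpha,\partial B_1(0))} \leq C(1+M_1)\sigma$. On the now-known domain $\mathcal{N}_{\tilde f}^+$ I would freeze the quasilinear coefficients in the divergence equation by substituting $\nabla\tilde\varphi$ for $\nabla\varphi$ inside $H$ and inside the nonlinear part of ${\bf q}_*$, leaving a linear second-order equation for the next iterate $\varphi$. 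At the background $(S_0^+, u_0^+{\bf e}_x)$ the principal operator reduces to a constant-coefficient operator whose ellipticity constants are controlled by the downstream subsonic gap $c_0^{+2} - |u_0^+|^2 > 0$ (with $c_0^+ := \sqrt{\gamma p_0^+/\rho_0^+}$); for $\sigma$ small the frozen operator remains a $C^{\alpha}$ perturbation of this model and is therefore uniformly elliptic.

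Next I would straighten the domain by mapping $\mathcal{N}_{\tilde f}^+$ to the fixed half-cylinder $\mathcal{N}_0^+ := \mathcal{N}\cap\{x>0\}$ via $(x,r)\mapsto\bigl(\tfrac{x-\tilde f(r)}{1-\tilde f(r)},\,r\bigr)$, which fixes $\Gamma_{\rm ex}$ and $\Gamma_{\rm w}$ and sends $\mathfrak{S}_{\tilde f}$ to $\{x=0\}$. In the new variables, $\phi = \varphi - \varphi_0^+$ satisfies a linear uniformly elliptic equation together with Dirichlet data on $\Gamma_{\rm ex}$, an oblique-derivative condition on $\{x=0\}$ arising from the Rankine--Hugoniot Neumann condition in Problem~\ref{free-Pro}, and $\partial_r\phi = 0$ on $\Gamma_{\rm w}$; the obliqueness angle stays bounded away from zero for $\sigma$ small. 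Standard weighted Schauder theory for mixed Dirichlet--Neumann problems with corners at $\{x=0\}\cap\Gamma_{\rm w}$ then yields
\begin{equation*}
\|\phi\|_{2,\alpha,\mathcal{N}_0^+}^{(-1-\alpha,\Gamma_{\rm w})} \leq C\bigl(\|\text{equation RHS}\|_{0,\alpha,\mathcal{N}_0^+}^{(1-\alpha,\Gamma_{\rm w})} + \|\text{shock data}\|_{1,\alpha,\{x=0\}}^{(-\alpha,\cdot)} + \|\varphi^--\varphi_0^-\|_{1,\alpha}\bigr).
\end{equation*}
Tracking the dependence on $(S_*,\Lambda_*,\psi_*,\tilde\phi,\tilde f)$, the right-hand side is bounded by $C(1+M_2+M_3)\sigma + C(1+M_1)\sigma^2$.

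Choosing $M_1 = C_0(1+M_2+M_3)$ with $C_0$ sufficiently large and then $\sigma$ sufficiently small, the map $\mathcal{T}:\tilde\phi \mapsto \phi$ sends $\mathcal{I}(M_1)$ into itself, is continuous in a weaker Hölder norm, and has precompact image. Schauder's fixed point theorem then produces a fixed point, which together with the associated $f$ from Lemma~\ref{shock-lemma} yields a solution $(f,\varphi)$ of Problem~\ref{free-Pro} with the claimed estimate \eqref{lem2-est}. Uniqueness follows from a contraction argument: the difference of two solutions satisfies a linear elliptic problem whose coefficients and boundary data are of size $O(\sigma)$ in the difference, and the same Schauder estimate absorbs this difference back into itself when $\sigma$ is small.

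The principal technical obstacle I anticipate lies at the edge $\mathfrak{S}_f \cap \Gamma_{\rm w}$, where a Dirichlet-type condition (used implicitly in the determination of $f$) meets the Neumann condition $\partial_r\varphi=0$. The weighted space $C^{2,\alpha}_{(-1-\alpha,\Gamma_{\rm w,f}^+)}$ is tailored precisely to accommodate the possible weak corner singularity while preserving the $C^{2,\alpha}$ regularity needed to close the iteration and to feed back into the next Helmholtz component. A second subtle point, partially handled by Lemma~\ref{shock-lemma} already, is the transfer of regularity between $\phi|_{\mathfrak{S}_f}$ and $f$ on $B_1(0)$ through the implicit equation; this must be carried out in compatible weighted norms so that the estimates for $f$ and for $\varphi$ close simultaneously.
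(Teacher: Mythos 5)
Your proposal matches the paper's strategy in all essential respects: determine $f$ from a candidate $\tilde\phi$ via Lemma~\ref{shock-lemma}, linearize the potential equation, obtain weighted $C^{2,\alpha}$ estimates up to the corners, and close via the Schauder fixed-point theorem, with uniqueness by a contraction estimate on the difference of two solutions. The route is correct.

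Two small presentational differences are worth noting. First, you freeze the coefficients of the quasilinear divergence operator at the candidate $\nabla\tilde\varphi$, producing a variable $C^{0,\alpha}$-coefficient operator; the paper instead takes the constant-coefficient operator $\mathcal{L}(\phi)=\sum_i \partial_i(a_{ii}\partial_i\phi)$ at the background state and pushes the entire nonlinearity to the right-hand side as $\mathrm{div}\,{\bf F}(S_*-S_0^+,D\phi_*,{\bf t}_*)$. Both are valid; the paper's choice makes the Campanato-type $C^{1,\alpha}$ estimate up to the corner cleaner, since the frozen operator has universal ellipticity constants and no coefficient oscillation to absorb. Second, you straighten $\mathcal{N}_{\tilde f}^+$ to $\mathcal{N}_0^+$ before estimating; the paper instead estimates $\phi$ directly on $\mathcal{N}_f^+$ and only uses a flattening map $\mathfrak{T}_f$ to extend the solution to the fixed domain $\mathcal{N}_{-1/2}^+$, so that the iteration set lives on a domain independent of $f$. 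Your straightening achieves the same fixed-domain effect implicitly.

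One caution and one small correction. The caution: the clause ``standard weighted Schauder theory\ldots then yields'' is where the real work resides. The paper derives this estimate by hand through a Campanato/Morrey decay argument (following \cite[Theorem 3.13]{han2011elliptic} and \cite[Lemma 3.5]{bae2016subsonic}), comparing $\phi$ to solutions of homogeneous problems on half-balls centered at corner points, using the weighted decay of $\mathrm{div}\,{\bf F}_*$ and $\mathfrak{B}_*-\mathfrak{B}_*({\bf x}_0)$, and finishing with scaling. That step is not off-the-shelf, and your writeup should either cite a result covering this exact geometry (conormal/conormal corner at $\mathfrak{S}_f\cap\Gamma_{\rm w}$, Dirichlet/conormal corner at $\Gamma_{\rm ex}\cap\Gamma_{\rm w}$) or reproduce the Campanato argument. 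The correction: you describe the corner $\mathfrak{S}_f\cap\Gamma_{\rm w}$ as Dirichlet-meets-Neumann, but for the fixed-$f$ $\phi$-problem the condition on $\mathfrak{S}_f$ is the conormal one $(a_{ii}\partial_i\phi)\cdot(-{\bf n}_f)=\mathfrak{B}_*$; the Dirichlet relation $\varphi=\varphi^-$ is consumed entirely by the determination of $f$ and is not imposed on the linear problem. So that corner is conormal-conormal, while the Dirichlet-meets-Neumann corner is at $\Gamma_{\rm ex}\cap\Gamma_{\rm w}$. Finally, in tracking constants, the right-hand side bound should carry $C(M_1\sigma)^2$, not $C(1+M_1)\sigma^2$, coming from the quadratic nonlinearity of ${\bf F}$ in $D\phi_*$; this does not break the argument but the smallness threshold $\sigma_4$ then correctly inherits a $1/M_1$ (hence a $1/(1+M_2+M_3)$) factor, consistent with the statement of the lemma.
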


\begin{proof} The proof of Lemma \ref{lemma-free} is divided into five steps.

{\bf 1.}
Fix $\phi_{\ast}\in\mathcal{I}(M_1)$. 
By Lemma \ref{shock-lemma}, if $\sigma\le\sigma_3$, then there exists a radial function $f:B_1(0)\longrightarrow(-\frac{1}{4},\frac{1}{4})$ such that
\begin{equation*}
f(r)=\frac{\phi_{\ast}-(\varphi^--\varphi_0^-)}{u_0^--u_0^+}(f(r),r)\quad\mbox{and}\quad
\|f\|_{2,\alpha,B_1(0)}^{(-1-\alpha,\partial B_1(0))}\le C(1+M_1)\sigma.
\end{equation*}

{\bf 2.} (Linearized boundary value problem for $\varphi$)
For a function $H$ defined in \eqref{def-H-G}, 
we define $\widetilde{H}$ and ${\bf A}=(A_1,A_2,A_3)$ as follows:
\begin{equation*}
\widetilde{H}(\xi, {\bf s},{\bf v}):=H(\xi,{\bf s}+{\bf v}),\quad
A_j(\xi,{\bf s},{\bf v}):=\widetilde{H}(\xi, {\bf s},{\bf v})s_j\quad (j=1,2,3),
\end{equation*}
for $\xi\in\mathbb{R}$, ${\bf s}=(s_1,s_2,s_3)\in\mathbb{R}^3$, and ${\bf v}=(v_1,v_2,v_3)\in\mathbb{R}^3$.
Let us set 
\begin{equation}\label{aij-def}
{\bf V}_0:=(S_0^+,D\varphi_0^+,{\bf 0})\quad\mbox{and}\quad a_{ij}:=\partial_{s_j}A_i({\bf V}_0)\quad\mbox{for}\quad i,j=1,2,3.
\end{equation}
Then the constant matrix $[a_{ij}]_{i,j=1}^3$ is diagonal and there exists a positive constant $\nu\in(0,1/10]$ satisfying
\begin{equation*}\label{3D-nu-aij}
0<\nu< a_{ii}<\frac{1}{\nu}\quad\mbox{for all}\quad i=1,2,3.
\end{equation*}

We set $\phi:=\varphi-\varphi_0^+$ and rewrite the equation
$\mbox{div}\left(H(S,{\bf q}){\bf q}\right)=0$ as
\begin{equation*}
\mathcal{L}(\phi)=\mbox{div}{\bf F}(S-S_0^+,D\phi,{\bf t}),
\end{equation*}
where $\mathcal{L}$ and ${\bf F}=(F_1,F_2,F_3)$ are defined by
\begin{equation*}\label{def-F}
\left.\begin{split}
\mathcal{L}(\phi):=&\sum_{i=1}^3\partial_i(a_{ii}\partial_i\phi),\\
F_i(Q):=&-\widetilde{H}({\bf V}_0+Q)v_i-\int_0^1 D_{\xi,\bf v}A_i({\bf V}_0+tQ)dt\cdot(\xi,{\bf v})\\
&-{\bf s}\cdot\int_0^1D_{\bf s}A_i({\bf V}_0+tQ)-D_{\bf s}A_i({\bf V}_0)dt,
\end{split}
\right.
\end{equation*}
with  $Q=(\xi, {\bf s},{\bf v})\in\mathbb{R}\times(\mathbb{R}^3)^2$.
In the above, $\partial_{x_i}$ is abbreviated as $\partial_i$.
By the boundary conditions for $\varphi$ in \eqref{varphi-eq} and definition of $\varphi_0^+$ given in \eqref{B0}, the boundary conditions for $\phi$ on $\partial\mathcal{N}_f^+$ become
\begin{equation*}\label{boundary}
\left\{\begin{split}
(a_{ii}\partial_i\phi)\cdot(-{\bf n}_f)=\mathfrak{B}({\bf t}_{\ast},f',\nabla\phi)\quad&\mbox{on}\quad\mathfrak{S}_f,\\
(a_{ii}\partial_i\phi)\cdot{\bf e}_r=0\quad&\mbox{on}\quad\Gamma_{{\rm w},f}^+,\\
\phi=0\quad&\mbox{on}\quad\Gamma_{\rm ex}
\end{split}\right.
\end{equation*}
with
\begin{equation}\label{def-B}
\mathfrak{B}({\bf t},f',\nabla\phi):=\,a_{11}\left(-\frac{K_s(f')}{{\bf u}^-\cdot{\bf n}_f}+{\bf t}\cdot{\bf n}_f+\nabla\varphi_0^+\cdot{\bf n}_f\right)+\sum_{j=2}^3(a_{11}-a_{jj})\partial_j\phi({\bf n}_f\cdot{\bf e}_j).
\end{equation}

We consider the linear boundary value problem 
\begin{equation}\label{phi-equ}
\left\{\begin{split}
\mathcal{L}(\phi)=\mbox{div\,}{\bf F}_{\ast}\quad&\mbox{in}\quad\mathcal{N}_{f}^+,\\
(a_{ii}\partial_i\phi)\cdot(-{\bf n}_f)=\mathfrak{B}_{\ast}\quad&\mbox{on}\quad\mathfrak{S}_f,\\
(a_{ii}\partial_i\phi)\cdot{\bf e}_r=0\quad&\mbox{on}\quad\Gamma_{{\rm w},f}^+,\\
\phi=0\quad&\mbox{on}\quad\Gamma_{\rm ex},
\end{split}\right.
\end{equation}
with
\begin{equation*}
{\bf F}_{\ast}:={\bf F}(S_{\ast}-S_0^+,D\phi_{\ast},{\bf t}_{\ast})\quad\mbox{and}\quad\mathfrak{B}_{\ast}:=\mathfrak{B}({\bf t}_{\ast},f',\nabla\phi_{\ast}).
\end{equation*}
The standard elliptic theory (cf. Evans \cite{evans2002partial}, Gilbarg-Trudinger \cite{gilbarg2015elliptic}) yields that \eqref{phi-equ} has a unique weak solution $\phi\in H^1(\mathcal{N}_f^+)$ satisfying 
\begin{equation*}
\mathfrak{L}[\phi,\zeta]=\langle({\bf F}_{\ast},\mathfrak{B}_{\ast}),\zeta\rangle\quad\mbox{ for all $\zeta\in\left\{\zeta\in H^1(\mathcal{N}_f^+):\,\zeta=0\mbox{ on }\Gamma_{\rm ex}\right\}$}
\end{equation*}
where 
\begin{equation*}
\begin{split}
&\mathfrak{L}[\phi,\zeta]:=\int_{\mathcal{N}_f^+} \sum_{i=1}^{3} a_{ii}(\partial_i\phi)(\partial_i\zeta) d{\bf x},\\
&\langle({\bf F}_{\ast},\mathfrak{B}_{\ast}),\zeta\rangle:=\int_{\mathcal{N}_f^+}{\bf F}_{\ast}\cdot \nabla\zeta d{\bf x}-\int_{\partial\mathcal{N}_f^+\backslash\Gamma_{\rm ex}}({\bf F}_{\ast}\cdot{\bf n}_{\rm out})\zeta \,dS+\int_{\mathfrak{S}_f}\mathfrak{B}_{\ast}\zeta\,dS.
\end{split}
\end{equation*}
Furthermore, $\phi$ satisfies 
\begin{equation*}
\|\phi\|_{H^1(\mathcal{N}_f^+)}\le C\left(\|{\bf F}_{\ast}\|_{0,\alpha,\mathcal{N}_{f}^+}+\|\mathfrak{B}_{\ast}\|_{0,\alpha,\mathfrak{S}_f}\right).\end{equation*}

{\bf 3.} (Estimate of $\phi$)
For each ${\bf x}_0\in\overline{\mathcal{N}_f^+}$ and $\eta\in\mathbb{R}$ with $0<\eta<\frac{1}{10}$, let us set 
\begin{equation*}
\begin{split}
&B_\eta({\bf x}_0):=\{{\bf x}\in \mathbb{R}^3:|{\bf x}_0-{\bf x}|<\eta\},\quad D_\eta({\bf x}_0):=B_{\eta}({\bf x}_0)\cap\mathcal{N}_f^+,\\
&(\nabla\phi)_{{\bf x}_0,\eta}:=\frac{1}{|D_{\eta}({\bf x}_0)|}\int_{D_{\eta}({\bf x}_0)}\nabla\phi\, d{\bf x}.
\end{split}
\end{equation*} 
Since there exists a constant $\lambda_0\in(0,1/10)$ such that 
\begin{equation*}
\lambda_0\le\frac{|D_\eta({\bf x}_0)|}{|B_\eta({\bf x}_0)|}\le\frac{1}{\lambda_0},
\end{equation*}
we will follow the proofs of \cite[Theorem 3.13]{han2011elliptic} and \cite[Lemma 3.5]{bae2016subsonic} to get 
\begin{equation}\label{1alpha-est}
\int_{D_\eta({\bf x})}|\nabla\phi-(\nabla\phi)_{{\bf x},\eta}|^2 d{\bf x}\le C\left(\|{\bf F}_{\ast}\|^{(-\alpha,\Gamma_{{\rm w},f}^+)}_{1,\alpha,\mathcal{N}_{f}^+}+\|\mathfrak{B}_{\ast}\|^{(-\alpha,\partial\mathfrak{S}_f)}_{1,\alpha,\mathfrak{S}_f}\right)^2 {\eta}^{3+2\alpha}
\end{equation}
for any ${\bf x}\in\overline{\mathcal{N}_f^+}$. 
Once \eqref{1alpha-est} is proved, we obtain from \cite[Theorem 3.1]{han2011elliptic} that 
\begin{equation}\label{aalpha}
\|\phi\|_{1,\alpha,\mathcal{N}_{f}^+}\le C\left(\|{\bf F}_{\ast}\|^{(-\alpha,\Gamma_{{\rm w},f}^+)}_{1,\alpha,\mathcal{N}_{f}^+}+\|\mathfrak{B}_{\ast}\|^{(-\alpha,\partial\mathfrak{S}_f)}_{1,\alpha,\mathfrak{S}_f}\right).
\end{equation}
By the scaling argument and Schauder estimate with \eqref{aalpha}, we get
\begin{equation}\label{phi-est-lin}
\|\phi\|^{(-1-\alpha,\Gamma_{{\rm w},f}^+)}_{2,\alpha,\mathcal{N}_{f}^+}\le C\left(\|{\bf F}_{\ast}\|^{(-\alpha,\Gamma_{{\rm w},f}^+)}_{1,\alpha,\mathcal{N}_{f}^+}+\|\mathfrak{B}_{\ast}\|^{(-\alpha,\partial\mathfrak{S}_f)}_{1,\alpha,\mathfrak{S}_f}\right).
\end{equation}

We prove \eqref{1alpha-est} only for the case ${\bf x}\in\Gamma_{{\rm w},f}^+\cap\mathfrak{S}_f$. 
Fix ${\bf x}_0\in\Gamma_{{\rm w},f}^+\cap\mathfrak{S}_f$ and $\chi\in\mathbb{R}$ with $0<\chi<\frac{1}{10}$.
Let $\phi_{\rm h}$ be a weak solution  of the following problem
\begin{equation*}
\left\{\begin{split}
\mathcal{L}(\phi_{\rm h})=0\quad&\mbox{in}\quad D_{\chi}({\bf x}_0),\\
(a_{ii}\partial_i\phi_{\rm h})\cdot(-{\bf n}_f)=\mathfrak{B}_{\ast}({\bf x}_0)\quad&\mbox{on}\quad\partial D_{\chi}({\bf x}_0)\cap\mathfrak{S}_f,\\
(a_{ii}\partial_i\phi_{\rm h})\cdot{\bf e}_r=0\quad&\mbox{on}\quad\partial D_{\chi}({\bf x}_0)\cap\Gamma_{{\rm w},f}^+,\\
\phi_{\rm h}=\phi\quad&\mbox{on}\quad\partial D_{\chi}({\bf x}_0)\cap\mathcal{N}_f^+.
\end{split}\right.
\end{equation*}
Then $\phi_{\rm nh}:=\phi-\phi_{\rm h}$ satisfies 
\begin{equation*}
\int_{D_{\chi}({\bf x}_0)}a_{ij}\partial_i\phi_{\rm nh}\partial_j\xi \,d{\bf x}
=\int_{D_{\chi}({\bf x}_0)}(\mbox{div}{\bf F}_{\ast}) \zeta\, d{\bf x}+\int_{\partial D_{\chi}({\bf x}_0)\cap\mathfrak{S}_f}(\mathfrak{B}_{\ast}-\mathfrak{B}_{\ast}({\bf x}_0))\zeta\,dS
\end{equation*}
for any $\xi\in\left\{\zeta\in\mathcal{H}^1(D_{\chi}({\bf x}_0)):\, \zeta=0\mbox{ on }\partial D_{\chi}({\bf x}_0)\cap\mathcal{N}_f^+\right\}$. By taking the test function $\xi=\phi_{\rm nh}$ and using the  H\"older inequality, Sobolev inequality, and the Poincar\'e inequality, we get
\begin{equation}\label{Hol}
\begin{split}
\int_{D_{\chi}({\bf x}_0)}&|\nabla\phi_{\rm nh}|^2d{\bf x}\\
&\le C\left(\int_{D_{\chi}({\bf x}_0)}|\mbox{div}{\bf F}_{\ast}|^{6/5}d{\bf x}\right)^{5/3}+C\left(\chi\int_{\partial D_{\chi}({\bf x}_0)\cap\mathfrak{S}_f}|\mathfrak{B}_{\ast}-\mathfrak{B}_{\ast}({\bf x}_0)|^{2}\,dS\right)\\
&=:(i)+(ii).
\end{split}
\end{equation}
Since $\mbox{div}{\bf F}_{\ast}\in C^{0,\alpha}_{(1-\alpha,\Gamma_{{\rm w},f}^+)}(\mathcal{N}_f^+)$, 
the definition of the weighted H\"older norms yields that 
\begin{equation*}\label{weight-F}
|\mbox{div}{\bf F}_{\ast}({\bf x})|\le\delta_{\bf x}^{-1+\alpha}\|\mbox{div}{\bf F}_{\ast}\|_{0,\alpha,\mathcal{N}_f^+}^{(1-\alpha,\Gamma_{{\rm w},f}^+)}\quad\mbox{for}\quad{\bf x}\in\mathcal{N}_f^+,\quad\delta_{\bf x}=\inf_{{\bf z}\in\Gamma_{{\rm w},f}^+}|{\bf x}-{\bf z}|,
\end{equation*}
from which we obtain that 
\begin{equation}\label{(i)}
(i)\le C\chi^{3+2\alpha}\left(\|\mbox{div}{\bf F}_{\ast}\|_{0,\alpha,\mathcal{N}_f^+}^{(1-\alpha,\Gamma_{{\rm w},f}^+)}\right)^{2}\quad\mbox{for}\quad \alpha\in(\frac{1}{6},1).
\end{equation}
Since $\mathfrak{B}_{\ast}\in C^{1,\alpha}_{(-\alpha,\partial\mathfrak{S}_f)}(\mathfrak{S}_f)$, 
the definition of the weighted H\"older norms yields that 
\begin{equation}\label{est(ii)}
(ii)\le C\chi^{3+2\alpha}\left(\|\mathfrak{B}_{\ast}\|^{(-\alpha,\partial\mathfrak{S}_f)}_{1,\alpha,\mathfrak{S}_f}\right)^2.
\end{equation}
By \eqref{Hol}-\eqref{est(ii)}, we get
\begin{equation*}
\int_{D_{\chi}({\bf x}_0)}|\nabla\phi_{\rm nh}|^2d{\bf x}
\le C\chi^{3+2\alpha}\left(\|{\bf F}_{\ast}\|_{1,\alpha,\mathcal{N}_f^+}^{(-\alpha,\Gamma_{{\rm w},f}^+)}+\|\mathfrak{B}_{\ast}\|^{(-\alpha,\partial\mathfrak{S}_f)}_{1,\alpha,\mathfrak{S}_f}\right)^2.
\end{equation*}
Then \cite[Corollary 3.11]{han2011elliptic} implies for $0<\eta<\chi$
\begin{equation*}
\begin{split}
\int_{D_{\eta}({\bf x}_0)}|\nabla\phi-(\nabla\phi)_{{\bf x}_0,\eta}|^2d{\bf x}
&\le C\left(\frac{\eta}{\chi}\right)^5\int_{D_{\eta}({\bf x}_0)}|\nabla\phi-(\nabla\phi)_{{\bf x}_0,\eta}|^2d{\bf x}\\
&\quad+C\chi^{3+2\alpha}\left(\|{\bf F}_{\ast}\|_{1,\alpha,\mathcal{N}_f^+}^{(-\alpha,\Gamma_{{\rm w},f}^+)}+\|\mathfrak{B}_{\ast}\|^{(-\alpha,\partial\mathfrak{S}_f)}_{1,\alpha,\mathfrak{S}_f}\right)^2.
\end{split}
\end{equation*}
According to \cite[Lemma 3.4]{han2011elliptic}, we can replace $\chi^{3+2\alpha}$ by $\eta^{3+2\alpha}$. 
Hence the proof of \eqref{1alpha-est} is completed and we finally get the weighted $C^{2,\alpha}$-estimate \eqref{phi-est-lin} of $\phi$.

By direct computations, one can check that there exists a constant $\epsilon_1\in(0,\frac{1}{4})$ depending only on the data so that if 
\begin{equation*}
M_1\sigma+M_2\sigma+M_3\sigma\le\epsilon_1,
\end{equation*}
then we have 
\begin{equation}\label{est-FB}
\begin{split}
&\|{\bf F}_{\ast}\|_{1,\alpha,\mathcal{N}_{f}^+}^{(-\alpha,\Gamma_{{\rm w},f}^+)}\le C\left((M_1\sigma)^2+M_2\sigma+M_3\sigma\right),\\
&\|\mathfrak{B}_{\ast}\|_{1,\alpha,\mathfrak{S}_f}^{(-\alpha,\partial\mathfrak{S}_f)}\le C\left(\sigma+(M_1\sigma)^2+M_2\sigma+M_3\sigma\right).
\end{split}
\end{equation}
It follows from \eqref{phi-est-lin} and \eqref{est-FB} that
\begin{equation}\label{phi-est2}
\|\phi\|_{2,\alpha,\mathcal{N}_{f}^+}^{(-1-\alpha,\Gamma_{{\rm w},f}^+)}
\le C\left(\sigma+(M_1\sigma)^2+M_2\sigma+M_3\sigma\right).
\end{equation}

For any $\theta\in[0,2\pi)$, define a function $\phi^{\theta}$ by 
\begin{equation*}
\phi^{\theta}(x_1,x_2,x_3):=\phi(x_1,x_2\cos\theta-x_3\sin\theta,x_2\sin\theta+x_3\cos\theta).
\end{equation*}
Since $a_{22}=a_{33}$ by definitions of $a_{ij}$ in \eqref{aij-def}, $\phi^{\theta}$ is a solution to \eqref{phi-equ}.
The uniqueness of a solution to \eqref{phi-equ} implies that $\phi=\phi^{\theta}$. Hence $\phi$ is axisymmetric.

{\bf 4.} (Extension of $\phi$ into $\mathcal{N}_{-1/2}^+$)
To set an iteration mapping, we need to extend $\phi$ into $C_{(-1-\alpha,\Gamma_{\rm w})}^{2,\alpha}(\mathcal{N}_{-1/2}^+)$.
Define an extension function $\mathcal{E}_f(\phi)$ of $\phi$ by
\begin{equation}\label{extension}
\mathcal{E}_{f}(\phi):=E_f(\phi)\circ\mathfrak{T}_{f},
\end{equation}
where $\mathfrak{T}_{f}:\mathcal{N}_{2f-1}^+\longrightarrow\mathcal{N}$ is an invertible transformation defined by
\begin{equation*}
\mathfrak{T}_{f}(x_1,x_2,x_3)=\left(\frac{1}{1-f(r)}(x_1-1)+1,x_2,x_3\right)
\end{equation*}
and $E_f(\phi):\mathcal{N}\longrightarrow \mathbb{R}$ is a function defined by 
\begin{equation*}
E_f(\phi)(y_1,y_2,y_3):=\left\{\begin{split}
	\phi\circ\mathfrak{T}_{f}^{-1}(y_1,y_2,y_3)\quad&\mbox{for}\quad 0\le y_1<1,\\
	\sum_{i=1}^{3}c_i\left(\phi\circ\mathfrak{T}_{f}^{-1}\right)\left(-\frac{y_1}{i},y_2,y_3\right)\quad&\mbox{for}\quad-1<y_1<0,
	\end{split}\right.
\end{equation*}
where $c_1=6$, $c_2=-32$, and $c_3=27$, which are constants determined by the system of equations 
\begin{equation*}
\sum_{i=1}^3 c_i\left(-\frac{1}{i}\right)^m=1,\quad m=0,1,2.
\end{equation*}
Since $|f|<\frac{1}{4}$, the transformation $\mathfrak{T}_f$ is well-defined and $\mathcal{N}_{-1/2}^+\subset\mathcal{N}_{2f-1}^+$.
By a direct computation, we have 
\begin{equation}\label{est-E-phi}
\|\mathcal{E}_{f}(\phi)\|^{(-1-\alpha,\Gamma_{\rm w})}_{2,\alpha,\mathcal{N}_{-1/2}^+}\le C\|\phi\|^{(-1-\alpha,\Gamma_{{\rm w},f}^+)}_{2,\alpha,\mathcal{N}_{f}^+}.
\end{equation}
Then, it follows from \eqref{phi-est2} and \eqref{est-E-phi} that 
\begin{equation*}
\|\mathcal{E}_{f}(\phi)\|_{2,\alpha,\mathcal{N}_{-1/2}^+}^{(-1-\alpha,\Gamma_{\rm w})}
\le  C_{\star}\left(\sigma+(M_1\sigma)^2+M_2\sigma+M_3\sigma\right)
\end{equation*}
for a constant $C_{\star}>0$ depending only on the data.

{\bf 5.} 
For fixed $(S_{\ast},\Lambda_{\ast},\psi_{\ast})\in\mathcal{I}(M_2)\times\mathcal{I}(M_3)$, define an iteration mapping $\mathcal{J}^{(S_{\ast},\Lambda_{\ast},\psi_{\ast})}: \mathcal{I}(M_1)\longrightarrow C_{(-1-\alpha,\Gamma_{\rm w})}^{2,\alpha}(\mathcal{N}_{-1/2}^+)$ by 
\begin{equation*}
\mathcal{J}^{(S_{\ast},\Lambda_{\ast},\psi_{\ast})}(\phi_{\ast})=\mathcal{E}_{f}(\phi),
\end{equation*}
where $\phi$ is the solution to \eqref{phi-equ} associated with $\phi_{\ast}$.
We choose constants $M_1$ and $\sigma_4^{\ast}$ as
\begin{equation}\label{sigma4star}
M_1:=4C_{\star}+4C_{\star}M_2+4C_{\star}M_3\quad\mbox{and}\quad\sigma_4^{\ast}:=\min\left\{\sigma_3,\frac{1}{4C_{\star}M_1},\frac{\epsilon_1}{M_1+M_2+M_3}\right\}
\end{equation}
with $\sigma_3$ defined in \eqref{sigma-free} so that 
the mapping $\mathcal{J}^{(S_{\ast},\Lambda_{\ast},\psi_{\ast})}$ maps $\mathcal{I}(M_1)$ into itself whenever $\sigma\le\sigma_4^{\ast}$.

\sloppy The iteration set $\mathcal{I}(M_1)$ given in \eqref{Ite-set-phi} is a convex and compact subset of $C_{(-1-\alpha/2,\Gamma_{\rm w})}^{2,\alpha/2}(\mathcal{N}_{-1/2}^+)$. By using the uniqueness of a solution of \eqref{phi-equ}, one can show that the iteration mapping $\mathcal{J}^{(S_{\ast},\Lambda_{\ast},\psi_{\ast})}$ is continuous in  $C_{(-1-\alpha/2,\Gamma_{\rm w})}^{2,\alpha/2}(\mathcal{N}_{-1/2}^+)$.
Then the Schauder fixed point theorem implies that $\mathcal{J}^{(S_{\ast},\Lambda_{\ast},\psi_{\ast})}$ has a fixed point $\phi_{\sharp}\in\mathcal{I}(M_1)$. 
For such $\phi_{\sharp}$, there exists a radial function $f:B_1(0)\longrightarrow(-\frac{1}{4},\frac{1}{4})$ satisfying
\begin{equation}\label{free}
f(r)=\frac{\phi_{\sharp}-(\varphi^--\varphi_0^-)}{u_0^--u_0^+}(f(r),r)
\end{equation}
by Lemma \ref{shock-lemma}. 
Then $(f,\left.\phi_{\sharp}\right|_{\mathcal{N}_{f}^+}+\varphi_0^+)$ solves Problem \ref{free-Pro} and satisfies the estimate \eqref{lem2-est} for $\sigma\le\sigma_4^{\ast}$.

Let $(f^{(1)},\varphi^{(1)})$ and $(f^{(2)},\varphi^{(2)})$ be two solutions of Problem \ref{free-Pro}, and suppose that each solution satisfies the estimate \eqref{lem2-est}. 
Set
\begin{equation*}
\left\{\begin{split}
&\widetilde{\phi}:=(\varphi^{(1)}-\varphi_0^+)-\left((\varphi^{(2)}-\varphi_0^+)\circ\mathfrak{T}\right),\quad\widetilde{f}:=f^{(1)}-f^{(2)},\\
&\widetilde{\psi}:=\psi_{\ast}-\left(\psi_{\ast}\circ\mathfrak{T}\right),\quad\widetilde{S}:=S_{\ast}-\left(S_{\ast}\circ\mathfrak{T}\right),\quad\widetilde{\Lambda}:=\Lambda_{\ast}-\left(\Lambda_{\ast}\circ\mathfrak{T}\right)
\end{split}\right.
\end{equation*}
for an invertible transformation $\mathfrak{T}:\overline{\mathcal{N}_{f^{(1)}}^+}\longrightarrow\overline{\mathcal{N}_{f^{(2)}}^+}$ defined by 
\begin{equation}\label{trans1-2}
\mathfrak{T}(x_1,x_2,x_3):=\left(\frac{1-f^{(2)}(r)}{1-f^{(1)}(r)}(x_1-1)+1,x_2,x_3\right).
\end{equation}
Then, a direct computation with \eqref{free} yields that
\begin{equation}\label{f-diff-est}
\begin{split}
&\|\widetilde{f}\|^{(-\alpha,\partial B_1(0))}_{1,\alpha,B_1(0)}\le C\|\widetilde{\phi}\|^{(-\alpha,\Gamma_1)}_{1,\alpha,\mathcal{N}_1}\quad\mbox{for}\quad\sigma\le \frac{|u_0^+-u_0^-|}{3},\\
&\|\widetilde{\psi}\|^{(-\alpha,D_1)}_{1,\alpha,\Omega_1}+\|(\widetilde{S},\widetilde{\Lambda})\|^{(1-\alpha,\Gamma_1)}_{0,\alpha,\mathcal{N}_1}\le C(M_2+M_3)\sigma\|\widetilde{f}\|_{1,\alpha,B_1(0)}^{(-\alpha,\partial B_1(0))},
\end{split}
\end{equation}
where we set ($\mathcal{N}_1, \Omega_1,\Gamma_1,D_1$) as 
\begin{equation*}\label{simple-no}
\begin{split}
&\mathcal{N}_1:=\mathcal{N}_{f^{(1)}}^+,\quad\Gamma_1:=\partial\mathcal{N}_1\cap\{r=1\},\\
&\Omega_1:=\{(x,r)\in\mathbb{R}^2:\,f^{(1)}(r)<x<1, 0<r<1\},\quad D_1:=\partial\Omega_1\cap\{r=1\}.
\end{split}
\end{equation*}
To prove the uniqueness of a solution, 
we  estimate $\widetilde{\phi}$ in $\mathcal{N}_{f^{(1)}}^+$. 
For that purpose, we first get a boundary value problem for $\widetilde{\phi}$ in $\mathcal{N}_{f^{(1)}}^+$ by subtracting the boundary value problem  \eqref{phi-equ} for $\left(\varphi^{(2)}-\varphi_0^+\right)\circ\mathfrak{T}$ in  $\mathcal{N}_{f^{(1)}}^+$ from the boundary value problem \eqref{phi-equ} for $\varphi^{(1)}-\varphi_0^+$ in $\mathcal{N}_{f^{(1)}}^+$. 
 By using  \eqref{f-diff-est} and similar methods used to obtain the estimate \eqref{phi-est-lin}, we get
\begin{equation}\label{phi-est-uni}
\|\widetilde{\phi}\|^{(-\alpha,\Gamma_1)}_{1,\alpha,\mathcal{N}_1}\le C_{\star\star}(1+M_2+M_3)\sigma\|\widetilde{\phi}\|^{(-\alpha,\Gamma_1)}_{1,\alpha,\mathcal{N}_1}
\end{equation}
for a constant $C_{\star\star}>0$ depending only on the data.
Finally, we choose $\sigma_4$ to be 
\begin{equation}\label{sigma4}
\sigma_4:=\min\left\{ \sigma_4^{\ast}, \frac{|u_0^+-u_0^-|}{3}, \frac{1}{2C_{\star\star}(1+M_2+M_3)} \right\}
\end{equation}
for $\sigma_4^{\ast}$ defined in \eqref{sigma4star} so that \eqref{f-diff-est}-\eqref{phi-est-uni} imply that $(f^{(1)},\varphi^{(1)})=(f^{(2)},\varphi^{(2)})$ for $\sigma\le\sigma_4$. 
This finishes the proof of Lemma \ref{lemma-free}. 
\end{proof}

\subsection{Free boundary problem for $(\varphi,S,\Lambda)$}\label{sec-lem2}

\begin{problem}\label{Pro2}
For each $\psi_{\ast}\in\mathcal{I}(M_3)$, set
\begin{equation*}
{\bf q}^{\star}:={\bf q}(r,\psi_{\ast},D\psi_{\ast},D\varphi,\Lambda),\quad {\bf t}^{\star}:={\bf t}(r,\psi_{\ast},D\psi_{\ast},\Lambda)
\end{equation*}
for $({\bf q},{\bf t})$ given in \eqref{3D-u} and \eqref{def-T}, respectively. Find $(f,\varphi,S,\Lambda)$ satisfying 
\begin{equation*}
\left\{\begin{split}
	H(S,{\bf q}^{\star}){\bf q}^{\star}\cdot\nabla (S,\Lambda)={\bf 0}\quad&\mbox{in}\quad\mathcal{N}_{f}^+,\\
	(S,\Lambda)=\left(S_{\rm sh}(f'),\Lambda^-\right)\quad&\mbox{on}\quad\mathfrak{S}_{f},
\end{split}\right.
\end{equation*}
and
\begin{equation}\label{FixS-varphi}
\left\{\begin{split}
	\mbox{div\,}(H(S,{\bf q}^{\star}){\bf q}^{\star})=0\quad&\mbox{in}\quad\mathcal{N}_{f}^+,\\
	\varphi=\varphi^-,\quad-\nabla\varphi\cdot{\bf n}_{f}=-\frac{K_s(f')}{{\bf u}^-\cdot{\bf n}_f}+{\bf t}^{\star}\cdot{\bf n}_f\quad&\mbox{on}\quad\mathfrak{S}_{f},\\
\partial_r\varphi=0\quad&\mbox{on}\quad\Gamma_{{\rm w},f}^+,\\
\varphi=\varphi_0^+\quad&\mbox{on}\quad\Gamma_{\rm ex},
\end{split}\right.
\end{equation}
where $H$, $S_{\rm sh}$, and $K_s(f')$ are given in \eqref{def-H-G}, \eqref{def-A}, and  \eqref{def-KS}, respectively.
\end{problem}

\begin{lemma}\label{lemma2}
There exists a small constant $\sigma_5>0$ depending only on the data and $ M_3$ so that if 
$$\sigma\le\sigma_5,$$
then, for each $\psi_{\ast}\in\mathcal{I}(M_3)$,  Problem \ref{Pro2}  has a unique axisymmetric solution $(f,\varphi,S,\Lambda)$ that satisfies
\begin{equation}\label{lem3-est}
\begin{split}
\|f\|_{2,\alpha,B_1(0)}^{(-1-\alpha,\partial B_1(0))}+\|\varphi-\varphi_0^+\|^{(-1-\alpha,\Gamma_{{\rm w},f}^+)}_{2,\alpha,\mathcal{N}_f^+}&\le C(1+M_3)\sigma,\\
\|(S,\Lambda)-(S_0^+,0)\|^{(-\alpha,\Gamma_{{\rm w},f}^+)}_{1,\alpha,\mathcal{N}_f^+}&\le C(1+M_3)\sigma,
\end{split}
\end{equation}
where the constant $C>0$ depends only on the data.
\end{lemma}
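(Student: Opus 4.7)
The strategy is to set up a Schauder-type iteration on the pair $(S, \Lambda)$ alone, using Lemma~\ref{lemma-free} as a black box to produce the accompanying $(f, \varphi)$ at each step. Fix $\psi_\ast \in \mathcal{I}(M_3)$. Given $(S_\ast, \Lambda_\ast) \in \mathcal{I}(M_2)$, Lemma~\ref{lemma-free} (applied with this triple) yields a unique pair $(f, \varphi)$ solving the free boundary problem \eqref{varphi-eq} and obeying the estimate \eqref{lem2-est}, provided $\sigma$ is smaller than the constant $\sigma_4$ produced there. In particular the velocity field ${\bf q}^\star = {\bf q}(r, \psi_\ast, D\psi_\ast, D\varphi, \Lambda_\ast)$ is in $C^{1,\alpha}$ up to the boundary and, after further shrinking $\sigma$, satisfies ${\bf q}^\star \cdot {\bf e}_x \ge u_0^+/2 > 0$ throughout $\overline{\mathcal{N}_f^+}$.

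With $(f, \varphi)$ and hence ${\bf q}^\star$ in hand, I would solve the two transport equations $H(S,{\bf q}^\star){\bf q}^\star \cdot \nabla S = 0$ and $H(S,{\bf q}^\star){\bf q}^\star \cdot \nabla \Lambda = 0$ with data on $\mathfrak{S}_f$ by the method of characteristics. Since ${\bf q}^\star \cdot {\bf e}_x > 0$, every streamline entering $\mathcal{N}_f^+$ through $\mathfrak{S}_f$ exits through $\Gamma_{\rm ex}$, and conversely each point $(x,r) \in \overline{\mathcal{N}_f^+}$ lies on a unique streamline originating from some $(f(r_0), r_0) \in \mathfrak{S}_f$. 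Setting
\[
S(x,r) := S_{\rm sh}(f'(r_0)), \qquad \Lambda(x,r) := \Lambda^-(f(r_0), r_0),
\]
gives axisymmetric functions; $\Lambda(x,0) = 0$ follows automatically because the axis $r = 0$ is itself a characteristic and $\Lambda^-|_{r=0} = 0$. Standard ODE theory applied to the characteristic flow, combined with \eqref{lem2-est} and the fact that $S_{\rm sh}(f')$ and $\Lambda^-|_{\mathfrak{S}_f}$ have the same weighted Hölder regularity as $f'$ and $\Lambda^-$ respectively, yields
\[
\|(S, \Lambda) - (S_0^+, 0)\|_{1,\alpha, \mathcal{N}_f^+}^{(-\alpha, \Gamma_{{\rm w},f}^+)} \le C(1 + M_3)\sigma.
\]

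I would then extend the new $(S, \Lambda)$ to $\mathcal{N}_{-1/2}^+$ by the same device $\mathcal{E}_f$ used in \eqref{extension}, obtaining the iteration mapping $\mathcal{K}^{\psi_\ast}: \mathcal{I}(M_2) \to C^{1,\alpha}_{(-\alpha,\Gamma_{\rm w})}(\mathcal{N}_{-1/2}^+)^2$. For $M_2$ chosen sufficiently large relative to the data and $M_3$, and for $\sigma$ small, $\mathcal{K}^{\psi_\ast}$ maps $\mathcal{I}(M_2)$ into itself. The set $\mathcal{I}(M_2)$ is convex and compact in $C^{1,\alpha/2}_{(-\alpha/2,\Gamma_{\rm w})}$, and continuity of $\mathcal{K}^{\psi_\ast}$ in this weaker topology follows from the uniqueness part of Lemma~\ref{lemma-free} plus continuous dependence of characteristics on vector-field coefficients. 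Schauder's fixed-point theorem then produces a fixed point $(S, \Lambda)$, and the corresponding $(f, \varphi)$ from Lemma~\ref{lemma-free} furnishes the desired solution of Problem~\ref{Pro2}, with the estimate \eqref{lem3-est} following by combining the bounds for $(f,\varphi)$ and for $(S, \Lambda)$.

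The main obstacle is the uniqueness claim, because two different solutions are defined on two different domains $\mathcal{N}_{f^{(j)}}^+$ and share no a priori common characteristic structure. I would pull the second solution back to the first domain via the transformation $\mathfrak{T}$ in \eqref{trans1-2}; the transport equations then give an $L^\infty$-type estimate $\|(\widetilde S, \widetilde \Lambda)\|_{0,\alpha} \le C(1 + M_3)\sigma \, \|\widetilde f\|_{1,\alpha}$ along characteristics, while the uniqueness argument of Lemma~\ref{lemma-free} gives $\|\widetilde f\|_{1,\alpha} \le C\|\widetilde \phi\|_{1,\alpha}$ and an analogue of \eqref{phi-est-uni} now closed against $(\widetilde S, \widetilde \Lambda)$. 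Choosing $\sigma_5 \le \sigma_4$ small enough that the combined constant $C(1 + M_3)\sigma_5$ is less than $\tfrac{1}{2}$ forces $\widetilde \phi$, $\widetilde f$, and $(\widetilde S, \widetilde \Lambda)$ to vanish identically, completing the proof.
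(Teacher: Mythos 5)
Your proposal follows essentially the same route as the paper: freeze $\psi_\ast$, apply Lemma~\ref{lemma-free} as a black box to get $(f,\varphi)$ from a given $(S_\ast,\Lambda_\ast)\in\mathcal{I}(M_2)$, solve the transport equations for $(S,\Lambda)$ with Rankine--Hugoniot data on $\mathfrak{S}_f$, extend via $\mathcal{E}_f$, close a Schauder iteration on $\mathcal{I}(M_2)$, and then run a contraction argument for uniqueness after pulling the two solutions onto a common domain via $\mathfrak{T}$. The one place where the paper is more concrete than your sketch is the transport step: rather than invoking ``standard ODE theory applied to the characteristic flow'' to get weighted $C^{1,\alpha}$ regularity of the characteristic map, the paper flattens to $\mathcal{N}_0^+$, introduces the mass-flux stream function $w(y,t)=\int_0^t zN_y(y,z)\,dz$, and writes the solution explicitly as $(S,\Lambda)=(\widetilde{S_{\rm sh}},\widetilde{\Lambda_{\rm sh}})\circ\mathcal{R}_{\rm sh}$ with $\mathcal{R}_{\rm sh}=\mathcal{G}^{-1}\circ w$, citing Proposition~3.5 of~\cite{bae20183} for the resulting estimate \eqref{S-L-est}; this explicit representation is what delivers the weighted H\"older bound cleanly, and also yields the axis regularity $\lim_{r\to 0+}\partial_r(\Lambda/r)=0$ needed to keep $\Lambda$ in $\mathcal{I}_2(M_2)$, a point worth making explicit rather than leaving to the characteristic-flow folklore.
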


\begin{proof} The proof of Lemma \ref{lemma2} is divided into three steps.

{\bf 1.}  Fix $(S_{\ast},\Lambda_{\ast})\in\mathcal{I}(M_2)$. By Lemma \ref{lemma-free}, if $\sigma\le\sigma_4$, then there exists a unique axisymmetric solution $(f,\varphi)$ satisfying \eqref{FixS-varphi} associated with $(S,\Lambda)=(S_{\ast},\Lambda_{\ast})$.
Moreover, the solution $(f,\varphi)$ satisfies
\begin{equation}\label{est-f-var}
\|f\|_{2,\alpha,B_1(0)}^{(-1-\alpha,\partial B_1(0))}+\|\varphi-\varphi_0^+\|^{(-1-\alpha,\Gamma_{{\rm w},f}^+)}_{2,\alpha,\mathcal{N}_f^+}\le C(1+M_2+M_3)\sigma.
\end{equation}

{\bf 2.} (Initial value problem for $(S,\Lambda)$)
We find a solution $(S,\Lambda)$ of the following initial value problem:
\begin{equation}\label{trans}
\left\{\begin{split}
	H(S_{\ast},{\bf q}_{\star}){\bf q}_{\star}\cdot\nabla (S,\Lambda)={\bf 0}\quad&\mbox{in}\quad\mathcal{N}_{f}^+,\\
	(S,\Lambda)=\left(S_{\rm sh}(f'),\Lambda^-\right)\quad&\mbox{on}\quad\mathfrak{S}_{f},
\end{split}\right.
\end{equation}
with 
${\bf q}_{\star}:={\bf q}(r,\psi_{\ast},D\psi_{\ast},D\varphi,\Lambda_{\ast}).$
To solve this problem, we apply the proof of \cite[Proposition 3.5]{bae20183}.
For that purpose, we first rewrite \eqref{trans} as a problem defined in $\mathcal{N}_0^+:=\mathcal{N}\cap\{x>0\}$ by using the change of variables with a flattening map $\mathfrak{T}_f:\mathcal{N}_f^+\longrightarrow\mathcal{N}_0^+$ defined by 
\begin{equation*}
\mathfrak{T}_f(x_1,x_2,x_3)=\left(\frac{x_1-1}{1-f(r)}+1,x_2,x_3\right).
\end{equation*}
Since $|f|\le\frac{1}{4}$, $\mathfrak{T}_f$ is invertible and 
\begin{equation*}
\mathfrak{T}^{-1}_f(y_1,y_2,y_3)=((y_1-1)(1-f(t))+1,y_2,y_3),
\end{equation*}
where $(y,t,\theta)$ denote the cylindrical coordinates of $(y_1,y_2,y_3)\in\mathcal{N}_0^+$, i.e., 
$$(y_1,y_2,y_3)=(y,t\cos\theta,t\sin\theta),\quad t\ge0,\quad\theta\in\mathbb{T}.$$
For ${\bf y}\in\mathcal{N}_0^+$ and ${\bf y}_0\in\partial\mathcal{N}_0^+\cap\{y=0\}$, set
\begin{equation}\label{S-tilde}
\begin{split}
&({S}^{\star},{\Lambda}^{\star})({\bf y}):=(S,\Lambda)\circ\mathfrak{T}_f^{-1}({\bf y}),\quad(\widetilde{S_{\rm sh}},\widetilde{\Lambda_{\rm sh}})({\bf y}_0):=(S_{\rm sh}(f'),\Lambda^-)\circ\mathfrak{T}_f^{-1}({\bf y}_0),\\
&M_x({\bf y}):=\left(H(S_{\ast},{\bf q}_{\star}){\bf q}_{\star}\cdot{\bf e}_x\right)\circ\mathfrak{T}^{-1}_f({\bf y}),\quad M_r({\bf y}):=\left(H(S_{\ast},{\bf q}_{\star}){\bf q}_{\star}\cdot{\bf e}_r\right)\circ\mathfrak{T}_f^{-1}({\bf y}),
\end{split}
\end{equation}
to rewrite the initial value problem \eqref{trans} as follows:
\begin{equation}\label{New-trans}
\left\{\begin{split}
(N_y\partial_y+N_t\partial_t)({S}^{\star},{\Lambda}^{\star})={\bf 0}\quad&\mbox{in}\quad\mathcal{N}_0^+,\\
({S}^{\star},{\Lambda}^{\star})=(\widetilde{S_{\rm sh}},\widetilde{\Lambda_{\rm sh}})\quad&\mbox{on}\quad\partial\mathcal{N}_0^+\cap\{y=0\},
\end{split}\right.
\end{equation}
with 
\begin{equation*}
N_y:=M_x+(y-1)f'(t)M_r\quad\mbox{and}\quad N_t:=(1-f(t))M_r.
\end{equation*}

In the below (i)-(iii), we check that the sufficient conditions to apply \cite[Proposition 3.5]{bae20183} are hold:\\
(i) From the equation $\mbox{div\,}(H(S_{\ast},{\bf q}_{\star}){\bf q}_{\star})=0$ in $\mathcal{N}_f^+$, one can directly check that 
\begin{equation}\label{div-trans}
\partial_y(tN_y)+\partial_t(tN_t)=0\quad\mbox{in}\quad\mathcal{N}_0^+.
\end{equation}
(ii) Since $(S_{\ast},\Lambda_{\ast},\psi_{\ast})\in\mathcal{I}(M_2)\times\mathcal{I}(M_3)$ and the estimate \eqref{est-f-var} holds, there exists a constant $\epsilon_2>0$ depending only on the data so that if 
\begin{equation*}
(M_2+M_3)\sigma\le \epsilon_2,
\end{equation*}
then we have
\begin{equation*}
\|N_y-\rho_0^+u_0^+\|_{0,\mathcal{N}_0^+}\le C_{\sharp}(1+M_2+M_3)\sigma
\end{equation*}
for a constant $C_{\sharp}>0$ depending only on the data.
Thus
\begin{equation}\label{Ny-est}
0<\frac{\rho_0^+u_0^+}{2}\le N_y\le\frac{3\rho_0^+u_0^+}{2}\quad\mbox{in}\quad\overline{\mathcal{N}_0^+}
\end{equation}
for
\begin{equation}\label{tilde-sigma5}
\sigma\le\min\left\{\frac{\epsilon_2}{M_2+M_3},\frac{\rho_0^+u_0^+}{2C_{\sharp}(1+M_2+M_3)}\right\}=:\widetilde{\sigma_5}.
\end{equation}
(iii) The conditions
\begin{equation*}
\partial_r\varphi=0\quad\mbox{and}\quad \psi_{\ast}=0\quad\mbox{on}\quad \Gamma_{{\rm w},f}^+\cup\left(\mathcal{N}_f^+\cup\{r=0\}\right)
\end{equation*}
imply that
\begin{equation*}
N_t\equiv 0\quad\mbox{on}\quad \left(\partial\mathcal{N}_0^+\cap\{t=1\}\right)\cup\left(\mathcal{N}_0^+\cap\{t=0\}\right).
\end{equation*}

Now we regard \eqref{New-trans} as a problem defined in a two-dimensional rectangular domain 
$\Omega^+:=\left\{(y,t)\in\mathbb{R}^2:\, 0<y<1,\, 0<t<1\right\}$, and apply \cite[Proposition 3.5]{bae20183}.
Then the initial value problem \eqref{New-trans} has the unique solution $({S}^{\star},{\Lambda}^{\star})$ defined by 
\begin{equation*}\label{sol-trans}
({S}^{\star},{\Lambda}^{\star})(y,t):=(\widetilde{S_{\rm sh}},\widetilde{\Lambda_{\rm sh}})(\mathcal{R}_{\rm sh}(y,t))\quad\mbox{for}\quad(y,t)\in\overline{\Omega^+}.
\end{equation*}
Here, $\mathcal{R}_{\rm sh}:\overline{\Omega^+}\longrightarrow[0,1]$ is a function defined by 
\begin{equation}\label{def-Rsh}
\mathcal{R}_{\rm sh}(y,t):=\mathcal{G}^{-1}\circ w(y,t)
\end{equation}
for $w:\overline{\Omega^+}\longrightarrow\mathbb{R}^+$ given by 
\begin{equation}\label{def-w}
w(y,t):=\int_0^t zN_y(y,z)dz
\end{equation}
and  an invertible function $\mathcal{G}:[0,1]\longrightarrow[w(0,0),w(0,1)]$ given by
\begin{equation*}
\mathcal{G}(r):=w(0,r).
\end{equation*} 
By \eqref{Ny-est} and the definition of $\mathcal{R}_{\rm sh}$,
there exists a constant $\mu\in(0,1)$ such that 
\begin{equation}\label{Rsh-est}
\mu\le\frac{\mathcal{R}_{\rm sh}(y,1)-\mathcal{R}_{\rm sh}(y,t)}{1-t}=\frac{1-\mathcal{R}_{\rm sh}(y,t)}{1-t}\le\frac{1}{\mu}.
\end{equation}
By \eqref{Rsh-est} and \cite[Proposition 3.5]{bae20183}, the solution $(S^{\star},\Lambda^{\star})$ satisfies 
\begin{equation*}\label{trans-esti}
\|({S}^{\star},{\Lambda}^{\star})-(S_0^+,0)\|^{(-\alpha,\{t=1\})}_{1,\alpha,\Omega^+}\le C\|(\widetilde{S_{\rm sh}},\widetilde{\Lambda_{\rm sh}})-(S_0^+,0)\|_{1,\alpha,\partial\Omega^+\cap\{y=0\}}^{(-\alpha,\{t=1\})}.
\end{equation*}
Then  $(S,\Lambda):=({S}^{\star},{\Lambda}^{\star})\circ\mathfrak{T}_f$ is the unique solution of the initial value problem \eqref{trans} and the solution satisfies
\begin{equation}\label{S-L-est}
\|(S,\Lambda)-(S_0^+,0)\|^{(-\alpha,\Gamma_{{\rm w},f}^+)}_{1,\alpha,\mathcal{N}_f^+}\le C\|(S_{\rm sh}(f'),\Lambda^-)-(S_0^+,0)\|_{1,\alpha,\mathfrak{S}_f}^{(-\alpha,\partial\mathfrak{S}_f)}.
\end{equation}

{\bf 3.} 
For a fixed $\psi_{\ast}\in\mathcal{I}(M_3)$, define an iteration mapping $\mathcal{J}^{\psi_{\ast}}:\mathcal{I}(M_2)\longrightarrow [C^{1,\alpha}_{(-\alpha,\Gamma_{\rm w})}(\mathcal{N}_{-1/2}^+)]^2$ by 
\begin{equation*}
\mathcal{J}^{\psi_{\ast}}(S_{\ast},\Lambda_{\ast})=(\mathcal{E}_f(S),\mathcal{E}_f(\Lambda)),\end{equation*}
where $\mathcal{E}_f$ is given by \eqref{extension} and $(S,\Lambda)$ is the solution to \eqref{trans} associated with $(S_{\ast},\Lambda_{\ast})$.
A direct computation with using \eqref{S-L-est} yields that 
\begin{equation*}
\|(\mathcal{E}_f(S),\mathcal{E}_f(\Lambda))-(S_0^+,0)\|_{1,\alpha,\mathcal{N}_{-1/2}^+}^{(-\alpha,\Gamma_{\rm w})}
\le C_{\sharp\sharp}\left(\sigma+(M_1\sigma)^2\right)
\end{equation*}
for $M_1>0$ given by \eqref{sigma4star} and $C_{\sharp\sharp}>0$ depending only on the data.

For further estimate, set $\mathcal{V}$ as
\begin{equation*}
\mathcal{V}(x,r):=
	\frac{\mathcal{E}_f(\Lambda)(x,r)}{r}\quad\mbox{for}\quad(x,r)\in\Omega_{-1/2}^+.
\end{equation*}
Then one can directly check from \eqref{div-trans}, \eqref{def-Rsh} and \eqref{def-w} that 
\begin{equation*}
\lim_{r\rightarrow0+}\partial_r\mathcal{V}=\lim_{r\rightarrow0+}\partial_r\left(\frac{\Lambda^{\star}\circ\mathfrak{T}}{r}\right)=0\quad\mbox{in}\quad \Omega_f^+:=\Omega_{-1/2}^+\cap\{x\ge f(r)\}.
\end{equation*}
By the definition of $\mathcal{E}_f$ given by \eqref{extension}, $\mathcal{V}$ satisfies
\begin{equation*}
\|\mathcal{V}\|_{1,\alpha,\Omega_{-1/2}^+}^{(-\alpha,\{r=1\})}\le C_{\sharp\sharp\sharp}\sigma
\end{equation*}
for a constant $C_{\sharp\sharp\sharp}>0$ depending only on the data.

Choose $M_2$ and $\sigma_5^{\ast}$ as 
\begin{equation}\label{sigma5star}
M_2:=2\left(C_{\sharp\sharp}+C_{\sharp\sharp\sharp}\right)\quad\mbox{and}\quad \sigma_5^{\ast}:=\min\left\{\sigma_4,\widetilde{\sigma_5},\frac{1}{1+M_2+M_3},\frac{M_2}{2C_{\sharp\sharp}M_1^2}\right\}
\end{equation}
for $(\sigma_4,\widetilde{\sigma_5})$ given in \eqref{sigma4} and \eqref{tilde-sigma5}.
Then, under such choices of $(M_2,\sigma_5^{\ast})$, the mapping $\mathcal{J}^{\psi_{\ast}}$ maps $\mathcal{I}(M_2)$ into itself whenever $\sigma\le\sigma_5^{\ast}$.

 The iteration set $\mathcal{I}(M_2)$ defined in \eqref{Ite-set-T} is a compact and convex subset of $[C^{1,\alpha/2}_{(-\alpha/2,\Gamma_{\rm w})}(\mathcal{N}_{-1/2}^+)]^2$.
By Lemma \ref{lemma-free} and the uniqueness of a solution for \eqref{trans},  one can prove that $\mathcal{J}^{\psi_{\ast}}$ is continuous in $[C^{1,\alpha/2}_{(-\alpha/2,\Gamma_{\rm w})}(\mathcal{N}_{-1/2}^+)]^2$.
Then the Schauder fixed point theorem yields that there exists a fixed point of $\mathcal{J}^{\psi_{\ast}}$, say $(S_{\sharp},\Lambda_{\sharp})$.
By Lemma \ref{lemma-free}, there exists a solution $(f,\varphi)$ of the free boundary problem \eqref{varphi-eq} associated with $(S_{\ast},\Lambda_{\ast})=(S_{\sharp},\Lambda_{\sharp})$ and the solution satisfies the estimate \eqref{lem2-est}.
Then $(f,\varphi,\left.S_{\sharp}\right|_{\mathcal{N}_f^+},\left.\Lambda_{\sharp}\right|_{\mathcal{N}_f^+})$ solves Problem \ref{Pro2} and satisfies the estimate \eqref{lem3-est} for $\sigma\le\sigma_5^{\ast}$.

Let $\mathcal{U}^{(k)}:=(f^{(k)},\varphi^{(k)},S^{(k)},\Lambda^{(k)})$ ($k=1,2$) be two solutions to Problem \ref{Pro2}, and suppose that each solution satisfies the estimate \eqref{lem3-est}.
Set
\begin{equation*}
\left\{\begin{split}
&\widetilde{\psi}:=\psi_{\ast}-\left(\psi_{\ast}\circ\mathfrak{T}\right),\quad \widetilde{f}:=f^{(1)}-f^{(2)},\\
&\widetilde{\phi}:=(\varphi^{(1)}-\varphi_0^+)-\left((\varphi^{(2)}-\varphi_0^+)\circ\mathfrak{T}\right),\\
&\widetilde{S}:=S^{(1)}-\left(S^{(2)}\circ\mathfrak{T}\right),\quad\widetilde{\Lambda}:=\Lambda^{(1)}-\left(\Lambda^{(2)}\circ\mathfrak{T}\right)
\end{split}\right.
\end{equation*}
for a transformation $\mathfrak{T}:\overline{\mathcal{N}_{f^{(1)}}^+}\longrightarrow\overline{\mathcal{N}_{f^{(2)}}^+}$ defined in \eqref{trans1-2}.
For simplicity of notations, let $(\mathcal{N}_k,  \Omega_k,\Gamma_k,D_k)$ $(k=1,2)$ denote 
\begin{equation*}
\begin{split}
&\mathcal{N}_k:=\mathcal{N}_{f^{(k)}}^+,\quad \Gamma_k:=\partial\mathcal{N}_k\cap\{r=1\},\\
&\Omega_k:=\{(x,r)\in\mathbb{R}^2:f^{(k)}(r)<x<1,\,0<r<1\},\quad D_k:=\partial\Omega_k\cap\{r=1\}.
\end{split}
\end{equation*}
By a method similar to the proof of Lemma \ref{lemma-free} with 
\begin{equation}\label{f-est-22}
\|\widetilde{f}\|^{(-\alpha,\partial B_1(0))}_{1,\alpha,B_1(0)}\le C\|\widetilde{\phi}\|_{1,\alpha,\mathcal{N}_1}^{(-\alpha,\Gamma_1)}\quad\mbox{and}\quad
\|\widetilde{\psi}\|_{1,\alpha,\Omega_1}^{(-\alpha,D_1)}\le CM_3\sigma\|\widetilde{f}\|_{1,\alpha,B_1(0)}^{(-\alpha,\partial B_1(0))},\end{equation}
one can show that 
\begin{equation*}
\|\widetilde{\phi}\|^{(-\alpha,\Gamma_1)}_{1,\alpha,\mathcal{N}_1}\le C^{\ast}(1+M_3)\left(\sigma\|\widetilde{\phi}\|^{(-\alpha,\Gamma_1)}_{1,\alpha,\mathcal{N}_1}+\|(\widetilde{S},\widetilde{\Lambda})\|_{0,\alpha,\mathcal{N}_1}^{(1-\alpha,\Gamma_1)}\right)
\end{equation*}
for a constant $C^{\ast}>0$ depending only on the data.
If it holds that 
\begin{equation*}
\sigma\le\frac{1}{2C^{\ast}(1+M_3)},
\end{equation*}
then
\begin{equation}\label{phi-diff-lem2}
\|\widetilde{\phi}\|^{(-\alpha,\Gamma_1)}_{1,\alpha,\mathcal{N}_1}\le C^{\ast}(1+M_3)\|(\widetilde{S},\widetilde{\Lambda})\|_{0,\alpha,\mathcal{N}_1}^{(1-\alpha,\Gamma_1)}.
\end{equation}
For $k=1,2$, let $\widetilde{S_{\rm sh}^{(k)}}$ and $\mathcal{R}_{\rm sh}^{(k)}$ $(k=1,2)$ be defined in \eqref{S-tilde} and \eqref{def-Rsh} associated with $\mathcal{U}^{(k)}$.
Then a direct computation with using \eqref{lem3-est} and \eqref{f-est-22}-\eqref{phi-diff-lem2} yields that 
\begin{equation*}\label{SS-R}
\begin{split}
\|\widetilde{S_{\rm sh}^{(1)}}(\mathcal{R}_{\rm sh}^{(1)})-\widetilde{S_{\rm sh}^{(2)}}(\mathcal{R}_{\rm sh}^{(2)}))\|^{(1-\alpha,\{t=1\})}_{0,\alpha,\mathcal{N}_0}&\le C(1+M_3)\sigma\|\mathcal{R}_{\rm sh}^{(1)}-\mathcal{R}_{\rm sh}^{(2)}\|^{(1-\alpha,\{r=1\})}_{0,\alpha,\Omega^+}\\
&\le C(1+M_3)\sigma\|(\widetilde{S},\widetilde{\Lambda})\|_{0,\alpha,\mathcal{N}_1}^{(1-\alpha,\Gamma_1)},
\end{split}
\end{equation*}
and 
\begin{equation}\label{phi-est-final}
\|(\widetilde{S},\widetilde{\Lambda})\|^{(1-\alpha,\Gamma_1)}_{0,\alpha,\mathcal{N}_1}\le C^{\ast\ast}(1+M_3)\sigma\|(\widetilde{S},\widetilde{\Lambda})\|_{0,\alpha,\mathcal{N}_1}^{(1-\alpha,\Gamma_1)}
\end{equation}
for a constant $C^{\ast\ast}>0$ depending only on the data. 
Finally, we choose $\sigma_5$ as 
\begin{equation}\label{sigma5}
\sigma_5:=\min\left\{\sigma_5^{\ast},\frac{1}{2C^{\ast}(1+M_3)},\frac{1}{2C^{\ast\ast}(1+M_3)}\right\}
\end{equation}
for $\sigma_5^{\ast}$ defined in \eqref{sigma5star} so that \eqref{f-est-22}-\eqref{phi-diff-lem2} and \eqref{phi-est-final} imply that 
$(f^{(1)},\varphi^{(1)}, S^{(1)},\Lambda^{(1)})=(f^{(2)},\varphi^{(2)},S^{(2)},\Lambda^{(2)})$ for $\sigma\le\sigma_5$. 
This completes the proof of Lemma \ref{lemma2}.
\end{proof}

\subsection{Proof of Theorem \ref{Thm-HD}}\label{sec-Thm-hd}
The proof of Theorem \ref{Thm-HD} is divided into three steps. 

{\bf 1.} Fix $\psi_{\ast}\in\mathcal{I}(M_3)$. According to Lemma \ref{lemma2}, there exists a unique solution $(f,\varphi,S,\Lambda)$ of Problem \ref{Pro2} for $\sigma\le\sigma_5$, and the solution satisfies the estimate \eqref{lem3-est}.
For such $(f,\varphi,S,\Lambda)$, we consider the following boundary value problem for ${\bf W}$: 
\begin{equation}\label{psi-prob}
\left\{\begin{split}
-\Delta{\bf W}=G_{\flat}{\bf e}_{\theta}\quad&\mbox{in}\quad\mathcal{N}_{f}^+,\\
-\nabla{\bf W}\cdot{\bf n}_{f}=\mathcal{A}_{\flat}{\bf e}_{\theta}\quad&\mbox{on}\quad\mathfrak{S}_{f},\\
{\bf W}={\bf 0}\quad&\mbox{on}\quad\Gamma_{{\rm w},f}^+,\\
\partial_x{\bf W}={\bf 0}\quad&\mbox{on}\quad\Gamma_{\rm ex},
\end{split}\right.
\end{equation}
with
\begin{equation*}
{\bf t}_{\flat}:={\bf t}(f,\psi_{\ast},D\psi_{\ast},\Lambda),\quad G_{\flat}:=G(S,\Lambda,\partial_r S,\partial_r\Lambda,{\bf t}_{\flat},\nabla\varphi),\quad
\mathcal{A}_{\flat}:=\mathcal{A}(r,\psi_{\ast},f')
\end{equation*}
for $({\bf t},G,\mathcal{A})$ given by \eqref{def-T}, \eqref{def-H-G}, and \eqref{def-A}, respectively.


Let $\mathcal{H}:=\left\{\zeta\in H^1(\mathcal{N}_f^+):\,\zeta=0\mbox{ on }\Gamma_{{\rm w},f}^+\right\}$. 
For $k=1,2,3$, if each $W_k\in\mathcal{H}$ satisfies 
\begin{equation*}
\mathfrak{L}[W_k,\zeta]=\langle (G_{\flat}{\bf e}_{\theta}\cdot{\bf e}_k),(\mathcal{A}_{\flat}{\bf e}_{\theta}\cdot{\bf e}_k),\zeta\rangle
\end{equation*}
for all $\zeta\in\mathcal{H}$, where 
\begin{equation*}
\begin{split}
&\mathfrak{L}[W_k, \zeta]:=\int_{\mathcal{N}_f^+}\sum_{i=1}^3 (\partial_iW_k)(\partial_i\zeta)\,d{\bf x},\\
&\langle (G_{\flat}{\bf e}_{\theta}\cdot{\bf e}_k),(\mathcal{A}_{\flat}{\bf e}_{\theta}\cdot{\bf e}_k),\zeta\rangle:=\int_{\mathcal{N}_f^+}(G_{\flat}{\bf e}_{\theta}\cdot{\bf e}_k)\zeta\, d{\bf x}+\int_{\mathfrak{S}_f}(\mathcal{A}_{\flat}{\bf e}_{\theta}\cdot{\bf e}_k)\zeta\,dS,
\end{split}
\end{equation*}
then we call ${\bf W}=(W_1,W_2,W_3)$ a weak solution of the problem \eqref{psi-prob}.

A direct computation implies that  
\begin{equation*}
\mathfrak{L}[W_k,\zeta]\le C\|W_k\|_{H^1(\mathcal{N}_f^+)}\|\zeta\|_{H^1(\mathcal{N}_f^+)},
\end{equation*}
and the Poincar\'e inequality yields that there exists $\nu_0>0$ depending only on the data such that 
\begin{equation*}
\mathfrak{L}[\zeta,\zeta]\ge \nu_0\|\zeta\|^2_{H^1(\mathcal{N}_f^+)}\quad\mbox{for all }\zeta\in\mathcal{H}.
\end{equation*}
Thus $\mathfrak{L}$ is a bounded bilinear functional on $\mathcal{H}\times\mathcal{H}$ and coercive. 
Since $G_{\flat}{\bf e}_{\theta}\in C_{(1-\alpha,\Gamma^+_{{\rm w},f})}^{0,\alpha}(\mathcal{N}_f^+)$, the definition of the weighted H\"older norms yields that  
\begin{equation*}\label{G-wei-est}
|G_{\flat}{\bf e}_{\theta}({\bf x})|\le \delta_{\bf x}^{-1+\alpha}\|G_{\flat}{\bf e}_{\theta}\|_{0,\alpha,\mathcal{N}_f^+}^{(1-\alpha,\Gamma_{{\rm w},f}^+)}\quad\mbox{for}\quad {\bf x}\in\mathcal{N}_f^+,\quad\delta_{\bf x}=\inf_{{\bf z}\in\Gamma_{{\rm w},f}^+}|{\bf x}-{\bf z}|,
\end{equation*}
from which we obtain that 
\begin{equation}\label{lax}
\int_{\mathcal{N}_f^+}|G_{\flat}{\bf e}_{\theta}|^{6/5}d{\bf x}\le C\left(\|G_{\flat}{\bf e}_\theta\|_{0,\alpha,\mathcal{N}_f^+}^{(1-\alpha,\Gamma_{{\rm w},f}^+)}\right)^{6/5}\quad\mbox{for}\quad \alpha\in(\frac{1}{6},1).
\end{equation}
By the H\"older inequality, Sobolev inequality, trace inequality, Poincar\'e inequality, and \eqref{lax}, we have
\begin{equation*}
|\langle (G_{\flat}{\bf e}_{\theta}\cdot{\bf e}_k),(\mathcal{A}_{\flat}{\bf e}_{\theta}\cdot{\bf e}_k),\zeta\rangle|\le C\left(\|G_{\flat}{\bf e}_{\theta}\|_{0,\alpha,\mathcal{N}_f^+}^{(1-\alpha,\Gamma_{{\rm w},f}^+)}+\|\mathcal{A}_{\flat}{\bf e}_{\theta}\|_{0,\alpha,\mathfrak{S}_f}\right)\|\zeta\|_{H^1(\mathcal{N}_f^+)}
\end{equation*}
for all $\zeta\in\mathcal{H}$.
Then the Lax-Milgram theorem yields that the boundary value problem \eqref{psi-prob} has a unique weak solution ${\bf W}\in H^1(\mathcal{N}_f^+)$ satisfying 
\begin{equation*}
\|{\bf W}\|_{H^1(\mathcal{N}_f^+)}\le C\left(\|G_{\flat}{\bf e}_\theta\|_{0,\alpha,\mathcal{N}_f^+}^{(1-\alpha,\Gamma_{{\rm w},f}^+)}+\|\mathcal{A}_{\flat}{\bf e}_{\theta}\|_{0,\alpha,\mathfrak{S}_f}\right).
\end{equation*}
%
%
By a method similar to the proof of Lemma \ref{lemma-free}, 
one can obtain 
\begin{equation*}
\|{\bf W}\|_{1,\alpha,\mathcal{N}_f^+}\le C\left(\|G_\flat {\bf e}_{\theta}\|^{(1-\alpha,\Gamma_{{\rm w},f}^+)}_{0,\alpha,\mathcal{N}_f^+}+\|\mathcal{A}_{\flat}{\bf e}_{\theta}\|^{(-\alpha,\partial\mathfrak{S}_f)}_{1,\alpha,\mathfrak{S}_f}\right).
\end{equation*}
Then, the Schauder estimate with scaling implies that 
\begin{equation*}
\|{\bf W}\|^{(-1-\alpha,\Gamma_{{\rm w},f}^+)}_{2,\alpha,\mathcal{N}_f^+}\le C\left(\|G_\flat {\bf e}_{\theta}\|^{(1-\alpha,\Gamma_{{\rm w},f}^+)}_{0,\alpha,\mathcal{N}_f^+}+\|\mathcal{A}_{\flat}{\bf e}_{\theta}\|^{(-\alpha,\partial\mathfrak{S}_f)}_{1,\alpha,\mathfrak{S}_f}\right).
\end{equation*}

{\bf 2.} By adjusting the proof of \cite[Proposition 3.3]{bae20183}, one can show that ${\bf W}$ has the form of 
$${\bf W}=\psi{\bf e}_{\theta}$$
and $\psi$ solves the boundary value problem 
\begin{equation*}\label{psi-equation}
\left\{\begin{split}
-\left(\partial_{xx}+\frac{1}{r}\partial_r(r\partial_r)-\frac{1}{r^2}\right)\psi=G_{\flat}\quad&\mbox{in}\quad\mathcal{N}_f^+,\\
-\nabla\psi\cdot{\bf n}_f=\mathcal{A}_{\flat}\quad&\mbox{on}\quad\mathfrak{S}_f,\\
\psi=0\quad&\mbox{on}\quad\Gamma_{{\rm w},f}^+\cup\left(\mathcal{N}_f^+\cap\{r=0\}\right),\\
\partial_x\psi=0\quad&\mbox{on}\quad\Gamma_{\rm ex}.
\end{split}\right.
\end{equation*}
Furthermore, $\psi$ satisfies
\begin{equation}\label{psi-rr-0}
\partial_{rr}\psi\equiv 0\quad\mbox{on}\quad\mathcal{N}_f^+\cap\{r=0\}
\end{equation} 
and
\begin{equation}\label{ppsi-est}
\begin{split}
&\|\psi\|_{1,\alpha,\Omega^+_f}\le C\left(\|G_{\flat}\|^{(1-\alpha,\Gamma_{{\rm w},f}^+)}_{0,\alpha,\mathcal{N}_f^+}+\|\mathcal{A}_{\flat}\|^{(-\alpha,\partial\mathfrak{S}_f)}_{1,\alpha,\mathfrak{S}_f}\right),\\
&\|\psi\|_{2,\alpha,\Omega^+_f}^{(-1-\alpha,\{r=1\})}\le C\left(\|G_{\flat}\|^{(1-\alpha,\Gamma_{{\rm w},f}^+)}_{0,\alpha,\mathcal{N}_f^+}+\|\mathcal{A}_{\flat}\|^{(-\alpha,\partial\mathfrak{S}_f)}_{1,\alpha,\mathfrak{S}_f}\right),\\
\end{split}
\end{equation}
where $\Omega_f^+$ is a two-dimensional space defined by $$\Omega_f^+:=\left\{(x,r)\in\mathbb{R}^2:\, f(r)<x<1,\, 0<r<1\right\}.$$
A direct computation yields that there exists a constant $\epsilon_3>0$ depending only on the data so that if 
$$M_3\sigma\le\epsilon_3,$$
then 
\begin{equation}\label{GA-est}
\begin{split}
&\|G_{\flat}\|^{(1-\alpha,\Gamma_{{\rm w},f}^+)}_{0,\alpha,\mathcal{N}_f^+}\le CM_2\sigma\le C\sigma,\\
&\|\mathcal{A}_{\flat}\|^{(-\alpha,\partial\mathfrak{S}_f)}_{1,\alpha,\mathfrak{S}_f}\le C\left(\sigma+(1+M_1)(1+M_3)\sigma^2\right)\le C\left(\sigma+(1+M_3)^2\sigma^2\right).
\end{split}
\end{equation}
It follows from \eqref{ppsi-est}-\eqref{GA-est} that 
\begin{equation}\label{est-psi-2}
\|\psi\|^{(-1-\alpha,\{r=1\})}_{2,\alpha,\Omega^+_f}\le C\left(\sigma+(1+M_3)^2\sigma^2\right).
\end{equation}

{\bf 3.} Define an iteration map $\mathcal{J}:\mathcal{I}(M_3)\longrightarrow C_{(-1-\alpha,\{r=1\})}^{2,\alpha}(\Omega_{-1/2}^+)$ by 
\begin{equation*}
\mathcal{J}(\psi_{\ast})=\mathcal{E}_f(\psi),
\end{equation*}
where $\mathcal{E}_f$ is defined in \eqref{extension} and $\psi{\bf e}_{\theta}$ is the solution to \eqref{psi-prob} associated with $\psi_{\ast}$.
By \eqref{psi-rr-0} and the condition $f'(0)=0$, we have 
\begin{equation*}
\partial_{rr}\mathcal{E}_f(\psi)\equiv 0\quad\mbox{on}\quad\{r=0\}.
\end{equation*}
By the definition of $\mathcal{E}_f$ and  \eqref{est-psi-2}, we also have
\begin{equation*}
\begin{split}
\|\mathcal{E}_f(\psi)\|_{2,\alpha,\Omega^+_{-1/2}}^{(-1-\alpha,\{r=1\})}
&\le C_{\flat}\left(\sigma+(1+M_3)^2\sigma^2\right)
\end{split}
\end{equation*}
for a constant $C_{\flat}>0$ depending only on the data.
Now, we choose constants $M_3$ and $\sigma_2$ as
\begin{equation*}\label{sigma22}
M_3:=4C_{\flat}\quad\mbox{and}\quad\sigma_2:=\min\left\{\sigma_5,\frac{\epsilon_3}{M_3},\frac{M_3}{4C_{\flat}},\frac{1}{8C_{\flat}},\frac{1}{4C_{\flat}M_3}\right\}
\end{equation*}
for $\sigma_5$ given in \eqref{sigma5}.
Then, under such choices of $(M_3,\sigma_2)$, the mapping $\mathcal{J}$ maps $\mathcal{I}(M_3)$ into itself whenever $\sigma\le\sigma_2$.

 The iteration set $\mathcal{I}(M_3)$ defined in \eqref{Ite-set-psi} is a compact and convex subset of $C_{(-1-\alpha/2,\{r=1\})}^{2,\alpha/2}(\Omega_{-1/2}^+)$.
\sloppy By Lemma \ref{lemma2} and the uniqueness of a solution for the boundary value problem \eqref{psi-prob}, one can prove that $\mathcal{J}$ is continuous in $C_{(-1-\alpha/2,\{r=1\})}^{2,\alpha/2}(\Omega_{-1/2}^+)$.
Then the Schauder fixed point theorem implies that $\mathcal{J}$ has a fixed point $\psi_{\sharp}\in\mathcal{I}(M_3)$.
According to Lemma \ref{lemma2}, there exists a solution $(f,\varphi,S,\Lambda)$ of Problem \ref{Pro2}  associated with $\psi_{\ast}=\psi_{\sharp}$.
Then $(f,\varphi,S,\Lambda,\left.\psi_{\sharp}\right|_{\mathcal{N}_f^+})$ is a solution of the free boundary problem \eqref{3D-H} with \eqref{w-ex-bd} and \eqref{boundary-HD},
and the solution satisfies the estimate \eqref{Thm-HD-est} by \eqref{lem3-est} and \eqref{est-psi-2}.
The proof of Theorem \ref{Thm-HD} is completed.
\qed


\section{Proof of Theorem \ref{MainThm}}\label{sec-proof-main}
According to Theorem \ref{Thm-HD}, for $\sigma\le\sigma_2$, the free boundary problem \eqref{3D-H} with \eqref{w-ex-bd} and \eqref{boundary-HD} has a solution $(f, S,\Lambda, \varphi,\psi)$ that satisfies the estimate \eqref{Thm-HD-est}. 
For such a solution $(f, S,\Lambda, \varphi,\psi)$,  we define $({\bf u},\rho,p)$ by 
\begin{equation*}
\begin{split}
&{\bf u}:=\left(\partial_x\varphi+\frac{1}{r}\partial_r(r\psi)\right){\bf e}_x+(\partial_r\varphi-\partial_x\psi){\bf e}_r+\frac{\Lambda}{r}{\bf e}_{\theta},\\
&\rho:=H(S,{\bf u}),\quad p:=S\rho^{\gamma}\quad\mbox{in}\quad\overline{\mathcal{N}_f^+},
\end{split}
\end{equation*}
where $H$ is given by  \eqref{def-H-G}.
Then, due to the estimate \eqref{Thm-HD-est}, $(f,{\bf u},\rho,p)$ satisfy the estimate \eqref{Thm2.1-uniq-est}.
So one can find a small constant $\sigma_1\in(0,\min\{\sigma_2,\frac{\rho_0^-}{2},\frac{u_0^-}{2}\})$ depending only on the data so that if $\sigma\le\sigma_1$, then $(f,{\bf u},\rho,p)$ satisfy 
\begin{equation*}
\left.\begin{split}
\rho\ge \frac{\rho_0^+}{2}>0,\quad{\bf u}\cdot{\bf e}_x\ge\frac{u_0^+}{2}>0,\quad c^2-|{\bf u}|^2\ge \frac{1}{2}\left((c_0^+)^2-(u_0^+)^2\right)>0 \quad&\mbox{in}\quad\overline{\mathcal{N}_f^+},\\
{\bf u}^-\cdot{\bf n}_f>{\bf u}\cdot{\bf n}_f>0\quad&\mbox{on}\quad\mathfrak{S}_f
\end{split}\right.
\end{equation*}
for $c_0^+:=\sqrt{\frac{\gamma p_0^+}{\rho_0^+}}$ and the unit normal vector field ${\bf n}_f$ on $\mathfrak{S}_f$ pointing toward the interior of $\mathcal{N}_f^+$.
The proof of Theorem \ref{MainThm} is completed.\qed


\vspace{.25in}
\noindent
{\bf Acknowledgements:}
The research of Hyangdong Park was supported in part by the National Research Foundation of Korea (NRF) grant funded by the Korea government (MSIT) (No. 20151009350).
The research of Hyeongyu Ryu was supported in part by  Samsung Science and Technology Foundation under Project Number SSTF-BA1502-02.

\smallskip

\end{document}